\documentclass[final]{article}

\usepackage{amsmath,amssymb, amsthm}
\usepackage[mathcal]{eucal}             
\usepackage{mathrsfs}
\usepackage{graphicx}
\usepackage{floatflt}

\usepackage[latin1]{inputenc}
\usepackage[T1]{fontenc}



\providecommand*{\unit}[1]{\,\ifmmode \mathrm{\,#1}\else\textup{#1}\fi} 
\providecommand*{\ap}[1]{\ifmmode^\mathrm{#1}\else\textsuperscript{#1}\fi} 



\newcommand*{\newoperator}[3]{\newcommand*{#1}{\mathop{#2}#3}} 

\renewcommand\Re{\operatorname{Re}}
\renewcommand\Im{\operatorname{Im}}
\newoperator{\sgn}{\mathrm{sgn}}{\nolimits}

\newcommand*\dif{\mathop{}\!\mathrm{d}}
\newcommand*{\deriv}[3][]{\frac{\dif^{#1}{#2}}{\dif {#3}^{#1}}} 

\newoperator{\rot}{\mathrm{rot}}{\nolimits}


\newcommand*{\abs}[1]{\lvert #1 \rvert}
\newcommand*{\norm}[1]{\lVert #1 \rVert}

\DeclareMathAccent{\ring}{\mathalpha}{operators}{"17} 

\newoperator{\esup}{\mathrm{ess\,sup}}{\limits}
\newoperator{\spansym}{\mathrm{span}}{\limits}



\newcommand{\R}{\mathcal{R}}

\newcommand{\cb}{\mathcal {B}}
\newcommand{\ch}{\mathcal {H}}




\newcommand{\beq}{\begin{equation}}
\newcommand{\eeq}{\end{equation}}


\newcommand{\rone}{\mathbf {R}}


\newtheorem{theo}{Theorem}[section]

\newtheorem{lem}{Lemma}[section]

\newtheorem{defin}{Definition}
\newtheorem{remark}{Remark}[section]

\begin{document}

\title{Local energy decay for wave equation in the absence of resonance at zero energy in 3D\thanks{\textbf{MSC (2000):} 35A05, 35Q55 \textbf{Keywords:} Wave equation, Schr\"odinger equation, local energy decay, solitary solutions, resonance.}}

\author{ Vladimir Georgiev\footnote{Department of Mathematics, Faculty of Sciences, Pisa University --- Largo Pontecorvo 5, 56127 Pisa, Italy, e-mail: \texttt{georgiev@dm.unipi.it}}\\%
Mirko Tarulli\footnote{Department of Mathematics, Faculty of Sciences, Pisa University --- Largo Pontecorvo 5, 56127 Pisa, Italy, e-mail: \texttt{tarulli@mail.dm.unipi.it}}}

\date{}

\maketitle

\begin{abstract}
In this paper we study spectral properties associated to Schr\"odinger operator $-\Delta-W,$ with potential  $W$ that is an exponential decaying $C^1$ function. As applications we prove local energy decay for solutions to the perturbed wave equation and lack of resonances for the NLS.
\end{abstract}

\section{Introduction} 

In this paper we study the problem of resonances at zero energy for the operator
\begin{equation}\label{eq.elliptic1}
S (\psi)(x) = -\Delta \psi(x) - W(|x|) \psi(x),
\end{equation}
with $W(r)$ being a positive real valued measurable function, decreasing sufficiently rapidly at infinity..
There exists a vast literature concerning the theory of resonances, we cite here \cite{A98}, \cite{BZ2001}, \cite{SZ97}, \cite{Sj2001} (and reference therein).
The resonances of an operator were introduced in physics and defined as the poles of its resolvent operator function taken in some generalized way. More precisely one can
observe that, if we choose a radial function $u(|x|)$ in $\mathbf{R}^3, $ we have the relation
$$ \triangle \left(  \frac{u(|x|)}{|x|} \right) = \frac{u^{\prime \prime}(|x|)}{|x|}.$$
Therefore, piking up $\psi(x)=u(x)/\abs{x}$ and $S(\psi)=P(u)/\abs{x},$ we can rewrite the operator \eqref{eq.elliptic1} as
\begin{equation}\label{eq.elliptic2}
P (u)(r) = -u^{\prime \prime}(r) - W(r) u(r)
\end{equation}
on the semi-line $(0,\infty)$ together with Dirichlet condition
$$ u(0) = 0 $$
at the origin, that is a selfadjoint unbounded operator in $L^2 (0,\infty)$ with domain
$$ \{ u \in H^2(0,\infty); u(0) = 0   \}. $$
It is well known that if the potential is of short range type, then the set of eigenvalues of $P$ is
finite, contained in $(-\infty ,0)$, with each eigenvalue of finite
multiplicity.

Recall that the  resolvent  of $P$
$$ R(\mu^2) =  (P-\mu^2)^{-1} $$
(considered as an operator from $C_0^\infty$ to $C^\infty$)  is a meromorphic operator in some subset of the complex plane. The poles of $R(\mu^2) $ are called resonances of $P$
The main goal of this work is to present an argument that gives sufficient condition for the non-existence
of resonances. This means the following,

\begin{theo}\label{theo.mainres}
Suppose the potential $W(r) \in C^1( 0, \infty)$ is a positive decreasing function  satisfying
(for some $r_0>0,$ $C>0$ and $ \varepsilon_0 > 0$ ) the estimates
\begin{equation}\label{eq:asWmain}
    |W^\prime(r)| + |W(r)|  \leq C e^{-\varepsilon_0 r}, \ \ r\geq r_0.
\end{equation}

Then there exists a positive $\delta$ such that there are no resonances in
$$  \{ \mu \in \mathbf{C};  |\mu| \leq \delta    \}.$$ Moreover, if $\mu $ with $\Im \mu > -\delta$ is a resonance,
then $\mu$ is a real negative number and $\mu$  is eigenvalue of $P.$
\end{theo}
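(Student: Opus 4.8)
The plan is to recast the resonance problem for $P$ as a question about the zeros of an associated Jost function, and then to exclude a zero at $\mu=0$ by a positivity argument that exploits $W>0$. First I would introduce the Jost solution $f(r,\mu)$, the unique solution of $-u''-Wu=\mu^2 u$ with $f(r,\mu)\sim e^{\iu\mu r}$ as $r\to\infty$, characterized by the Volterra equation
\[ f(r,\mu)=e^{\iu\mu r}+\int_r^\infty \frac{\sin(\mu(s-r))}{\mu}\,W(s)\,f(s,\mu)\,\dif s. \]
Since the kernel $\sin(\mu(s-r))/\mu$ is entire in $\mu$ and grows only like $e^{\abs{\Im\mu}(s-r)}$, the hypothesis \eqref{eq:asWmain} forces the Neumann series to converge uniformly on compact sets of the strip $\Im\mu>-\varepsilon_0/2$; this is where the exponential decay is essential. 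It yields that $f(r,\mu)$, together with $f(0,\mu)$ and $f'(0,\mu)$, continues analytically into that strip, in particular across the real axis and through $\mu=0$.

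Next I would pair $f$ with the regular (Dirichlet) solution $\varphi(r,\mu)$, normalized by $\varphi(0,\mu)=0$, $\varphi'(0,\mu)=1$, which solves its own Volterra equation and is entire in $\mu$. Up to a nonzero multiplicative constant the kernel of $R(\mu^2)$ is then
\[ G(r,r';\mu)=\frac{\varphi(r_<,\mu)\,f(r_>,\mu)}{F(\mu)},\qquad F(\mu):=\mathrm{Wr}[f(\cdot,\mu),\varphi(\cdot,\mu)]=f(0,\mu), \]
with $r_<=\min(r,r')$, $r_>=\max(r,r')$, so that the poles of $R(\mu^2)$ coincide exactly with the zeros of the Jost function $F$. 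Thus the whole theorem reduces to locating the zeros of $F$ in $\{\Im\mu>-\delta\}$.

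The conceptual heart of the argument — and the step I expect to be the main obstacle, since it is precisely the \emph{absence of resonance at zero energy} advertised in the title — is to show $F(0)\neq0$. At $\mu=0$ the integral equation becomes $f(r,0)=1+\int_r^\infty (s-r)\,W(s)\,f(s,0)\,\dif s$, whose kernel $(s-r)W(s)$ is nonnegative for $s\ge r$ because $W>0$. Hence every term of the iteration $f_0\equiv1$, $f_{n+1}(r)=\int_r^\infty(s-r)W(s)f_n(s)\,\dif s$ is nonnegative, so $f(r,0)\ge1$ for all $r\ge0$; in particular $F(0)=f(0,0)\ge1\neq0$. By continuity of $F$ there is $\delta_1>0$ with $F\neq0$ on $\{\abs{\mu}\le\delta_1\}$, which already proves the first assertion.

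For the \emph{moreover} part I would classify the remaining zeros. On the real axis with $\mu\neq0$ the solutions $f(\cdot,\mu)$ and $\overline{f(\cdot,\mu)}=f(\cdot,-\mu)$ are independent (their Wronskian equals $-2\iu\mu\neq0$), while $\varphi$ is real; hence $F(\mu)=0$ would force $f(\cdot,\mu)\parallel f(\cdot,-\mu)$, a contradiction, so $F\neq0$ on $\mathbf{R}\setminus\{0\}$. In the open upper half-plane a zero of $F$ makes $f(\cdot,\mu)$ an $L^2$ eigenfunction, so the corresponding energy $\mu^2$ is a negative real eigenvalue of the self-adjoint operator $P$; by the short-range structure recalled in the introduction these points are finite in number and bounded away from $0$. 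Finally, combining $F\neq0$ on all of $\mathbf{R}$ with the fact that $F(\mu)\to1$ as $\abs{\Re\mu}\to\infty$ in the strip, together with the uniform bounds on $F'$ coming from the same Jost estimates, a compactness argument furnishes a uniform $\delta\in(0,\delta_1]$ for which $F$ has no zeros in $\{-\delta<\Im\mu\le0\}$; this quantitative propagation of non-vanishing off the real axis is the main technical hurdle. Together with the upper half-plane analysis it shows that every resonance with $\Im\mu>-\delta$ corresponds to a negative real eigenvalue $\mu^2$ of $P$, completing the proof.
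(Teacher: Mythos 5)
Your reduction of the theorem to the zeros of a Jost function is a legitimate framework, but the conceptual heart of your argument --- the positivity of the Neumann series at $\mu=0$ --- rests on a sign error, and the step it is meant to deliver is not just unproven but false. The paper's operator is $Pu=-u''-Wu$ with $W>0$, i.e.\ the \emph{attractive} potential $-W$; writing the equation as $f''+\mu^2f=-Wf$ and applying variation of parameters gives
\[
f(r,\mu)=e^{\iu\mu r}\;-\;\int_r^\infty \frac{\sin\big(\mu(s-r)\big)}{\mu}\,W(s)\,f(s,\mu)\,ds,
\]
with a \emph{minus} sign; the equation you wrote, with the plus sign, is the one for the repulsive operator $-u''+Wu$ (compare with the paper's free-resolvent formula \eqref{eq.rep}). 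At $\mu=0$ the kernel $-(s-r)W(s)$ is negative, the iterates alternate in sign, and the bound $f(r,0)\ge 1$ evaporates. Worse, $F(0)=f(0,0)$ genuinely can vanish within the admissible class: for $W(r)=\lambda e^{-r}$ the substitution $z=2\sqrt{\lambda}\,e^{-r/2}$ turns $u''+Wu=0$ into the Bessel equation of order zero, the zero-energy Jost solution is $f(r,0)=J_0\big(2\sqrt{\lambda}\,e^{-r/2}\big)$ (it tends to $1$ at infinity), and $F(0)=J_0(2\sqrt{\lambda})$, which vanishes whenever $2\sqrt{\lambda}$ is a zero of $J_0$, e.g.\ $\lambda=j_{0,1}^2/4\approx 1.45$. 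This $W$ is positive, strictly decreasing, smooth, and satisfies \eqref{eq:asWmain}, so no argument that feeds only positivity and decay of $W$ into a sign-insensitive kernel estimate can establish $F(0)\neq0$; your proof in effect establishes the classical, easy statement for $-\Delta+W$. A telltale symptom is that you never invoke the hypothesis on $W'$, even though it appears explicitly in \eqref{eq:asWmain}.

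This is also where your route diverges from the paper's, which never introduces Jost functions: it first shows (Lemma \ref{l;strres}, together with the asymptotics of Lemma \ref{lem:2}) that a resonance at $\mu=0$ amounts to a bounded nontrivial solution of $u''+Wu=0$ with $u(0)=0$, and then attacks it through the multiplier identity \eqref{eq.6rel}, $\int_0^\infty \Phi\,|u|^2\,dr=0$ with $\Phi=2gW+GW'+g''/2$ and the two-regime weight $g''(r)=r^M$ on $(0,M^2)$, $e^{-\delta r}$ beyond (Lemmas \ref{lem:middlereg} and \ref{lem:largereg}, Theorem \ref{th:nores}); there the burden of the attractive sign is carried by the derivative term $GW'$, i.e.\ precisely by the hypothesis on $W'$ that your argument omits, and the assertions of the theorem away from $\mu=0$ are quoted from the limiting absorption principle and Ramm rather than proved. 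Your peripheral steps --- nonvanishing of $F$ on $\mathbf{R}\setminus\{0\}$ via the Wronskian $-2\iu\mu$, identification of upper-half-plane zeros with negative eigenvalues of the self-adjoint $P$, and the compactness argument using $F(\mu)\to1$ in the strip to extract a uniform $\delta$ --- are all sound, but they sit downstream of the broken step, so the proposal as written does not prove the theorem; any repair of the Jost-function route would have to bring in a mechanism, analogous to the paper's $GW'$ term, that actually distinguishes $-\Delta-W$ from $-\Delta+W$.
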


\begin{remark}
The fact that all resonances in domain of type
$$ \{ \mu \in \mathbf{C}; 0 < |\mu| < R,  |\Im \mu| < \delta    \}$$
are eigenvalues is a well-known and follows from resolvent estimates leading to limiting absorption principle (see \cite{A} for example).
The fact that this domain can be extended taking $R \rightarrow \infty$  is also well known (see \cite{Ra66} for example).
Therefore, the key information in this theorem is the lack of resonances at the origin $\mu = 0.$
\end{remark}

The resonances can be considered in some manner like eigenvalues. The existence of non-trivial solution of the equation $Pu=0,$
is a typical obstacle to find dispersive properties of the time evolution group
associated with the Schr\"odinger operator
$-\Delta - W(x).$ Therefore, as a first application of the above theorem \ref{theo.mainres},
we shall look for dispersive properties to the solution of the following (see \cite{BPST2003}, \cite{BPST2004}, \cite{Sj2001}, \cite{Vai89} for further details),
\begin{equation}\label{NLW}
\begin{aligned} &
u_{tt}-\Delta u-Wu =0, \qquad (t,x)\in\mathbf{R}\times \mathbf{R}^3
\\&
  u(0,x)=u_0(x), \,  u_t(0,x)=v_0(x),  \end{aligned}
\end{equation}
where the potential $W$ satisfies the assumptions  \eqref{ass.res2}. In our setting, we get local energy decay, see Theorem \ref{th.localenergy}, for the above problem \eqref{NLW}.

Consider now the solution $u$ of the Schr\"odinger type equation
\begin{equation}\label{eq.Schro}
    i  \partial_t u
    +  \Delta u + |u|^{p-1} u=0.
\end{equation}

The existence of  solitary type solutions to the Schr\"odinger equation \eqref{eq.Schro} is well - studied problem.
One can see for example \cite{CL82} and \cite{St77} the existence results for  $ 1 < p \leq 1+ 4/3$.

The natural functional associated with this problem is
\begin{eqnarray}\label{eq.defEps}
    \mathcal{E} (\chi) = \frac{1}{2} \| \nabla \chi\|^2_{L^2} & - & \frac{1}{p+1} \int_{\mathbf{R}^3} |\chi|^{p+1} dx.
\end{eqnarray}

The corresponding minimization problem is associated with the
quantity
\begin{equation}\label{defImu}
    I_N = \inf \{ \mathcal{E}(\chi) ; \chi \in H^1, \|\chi\|^2_{L^2} = N
    \}.
\end{equation}

We have the following result, see \cite{CL82} and \cite{Ca2003} for more details,

\begin{lem} {\bf 1.} \label{l.parLions}
For any $\omega >0 $  there exists a
unique positive solution $\chi(x) = \chi_\omega (x)  \in H^1$ of
the equation \eqref{eq.M-S1},  such that

i) the function $ \chi(x) = \chi(|x|) $ is radial one,

ii) the function $ \omega \in (0,\infty) \rightarrow \|
\chi_\omega \|_{L^2} $ is strictly increasing one and belongs to $C^1(0,\infty) $.
\end{lem}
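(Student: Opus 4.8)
The plan is to realize the solutions of \eqref{eq.M-S1} as minimizers of the constrained problem \eqref{defImu} and then to extract the qualitative information by ODE and scaling arguments, following the variational approach of \cite{CL82, Ca2003}. First I would note that a standing wave $e^{i\omega t}\chi(x)$ of \eqref{eq.Schro} reduces, after separating variables, precisely to \eqref{eq.M-S1}, and that a constrained critical point of the functional \eqref{eq.defEps} on the sphere $\norm{\chi}_{L^2}^2 = N$ solves \eqref{eq.M-S1} with the frequency $\omega$ entering as the Lagrange multiplier. To produce a minimizer for $I_N$ I would take a minimizing sequence and use the Gagliardo--Nirenberg inequality to bound the nonlinear term by a sublinear power of the kinetic energy; this control, available precisely because $1 < p < 1 + 4/3$, makes $I_N$ finite and forces minimizing sequences to be bounded in $H^1$. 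The concentration--compactness principle then applies, and the strict subadditivity $I_N < I_\alpha + I_{N-\alpha}$ for $0 < \alpha < N$ rules out both vanishing and dichotomy, so a subsequence converges strongly in $H^1$ to a minimizer $\chi$ solving \eqref{eq.M-S1} with some Lagrange multiplier $\omega_* > 0$.

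Next I would upgrade $\chi$ to a positive radial function. Since $\mathcal{E}(\abs{\chi}) \le \mathcal{E}(\chi)$ while $\norm{\abs{\chi}}_{L^2} = \norm{\chi}_{L^2}$, the minimizer may be taken nonnegative, and its Schwarz symmetric decreasing rearrangement does not increase $\norm{\nabla\chi}_{L^2}^2$ while preserving the $L^2$ and $L^{p+1}$ norms; hence the minimizer is radial and radially nonincreasing, which gives (i). Elliptic regularity makes $\chi$ classical and smooth, and the strong maximum principle applied to \eqref{eq.M-S1} improves $\chi \ge 0$ to $\chi > 0$. In radial variables the equation becomes the boundary value problem
\begin{equation}
-u'' - \frac{2}{r} u' + \omega u = u^p, \qquad u'(0) = 0, \quad \lim_{r\to\infty} u(r) = 0 ,
\end{equation}
for $u(r) = \chi(\abs{x})$.

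The hard part is uniqueness, which the variational construction does not deliver and which is a genuinely delicate property of the above ODE. Here I would appeal to the Kwong--McLeod uniqueness theorem for positive decaying radial solutions of $-\Delta u + \omega u = u^p$ in $\rthree$: a Wronskian/shooting analysis of the linearized problem shows that, for each fixed $\omega > 0$, the ground state is unique up to translation, and normalizing its maximum at the origin removes the translation freedom. This identifies a single $\chi_\omega$ with the minimizer and completes the uniqueness in (i).

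Finally, for (ii) I would exploit the exact scaling of the pure power nonlinearity. Rescaling the minimizer to Lagrange multiplier $1$ produces the unique positive radial solution $\chi_1$ of \eqref{eq.M-S1} with $\omega = 1$, and then $\chi_\omega(x) = \omega^{1/(p-1)} \chi_1(\sqrt{\omega}\, x)$ solves \eqref{eq.M-S1}; by the uniqueness just established this representation is exact and furnishes existence for every $\omega > 0$. A change of variables gives
\begin{equation}
\norm{\chi_\omega}_{L^2}^2 = \omega^{\frac{2}{p-1} - \frac{3}{2}}\, \norm{\chi_1}_{L^2}^2 ,
\end{equation}
which is real-analytic, hence $C^1$, in $\omega \in (0,\infty)$; and the strict subcriticality $p < 1 + 4/3$ makes the exponent $\tfrac{2}{p-1} - \tfrac{3}{2}$ strictly positive, so $\omega \mapsto \norm{\chi_\omega}_{L^2}$ is strictly increasing, proving (ii). Throughout I would keep careful track of the subcriticality of $p$, since it is used both for the compactness in the existence step and for the sign of the scaling exponent in the monotonicity step.
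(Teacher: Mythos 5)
Your proposal is correct, but it is worth noting that the paper does not actually prove this lemma: existence, positivity, radiality and uniqueness are deferred wholesale to \cite{CL82} and \cite{Ca2003}, and the only argument the paper supplies is the scaling computation of Remark \ref{r.lio}, namely $\chi_\omega(x)=\omega^{1/(p-1)}\chi_1(\sqrt{\omega}\,x)$ and $\|\chi_\omega\|_{L^2}=\omega^{1/(p-1)-3/4}\|\chi_1\|_{L^2}$ with $1/(p-1)-3/4>0$ for $p<1+4/3$ --- which is exactly your final step for part ii), down to the exponent. What your route adds is a self-contained reconstruction of the cited material: concentration--compactness with strict subadditivity for existence of a constrained minimizer, Schwarz rearrangement plus the strong maximum principle for positivity and radiality, and the Kwong--McLeod shooting/Wronskian theorem for uniqueness of the radial profile at fixed $\omega$. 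Invoking Kwong--McLeod is the right move and is in fact necessary: \cite{CL82} alone does not give uniqueness (it yields orbital stability of the \emph{set} of minimizers), so the lemma as stated silently relies on this deeper ODE result. Two small points to tighten. First, the lemma claims uniqueness among all positive $H^1$ solutions, while your symmetrization only shows that the \emph{minimizer} may be taken radial; to reduce an arbitrary positive solution to the radial case you need the Gidas--Ni--Nirenberg symmetry theorem (the paper itself cites \cite{GNN81} for precisely this, in Section 6), after which Kwong's theorem applies and normalizing the center removes the translation freedom, as you say. Second, your radial ODE $-u''-\tfrac{2}{r}u'+\omega u=u^p$ uses the standard sign convention $-\Delta\chi+\omega\chi=\chi^p$, which is consistent with Remark \ref{r.lio} and with the standing-wave ansatz, but not with the sign as misprinted in \eqref{eq.M-S1}; you have implicitly (and correctly) corrected the paper's typo, and your scaling identity and the positivity of the exponent $\tfrac{2}{p-1}-\tfrac{3}{2}$ under $p<1+4/3$ are exactly what the monotonicity and $C^1$ (indeed real-analytic) dependence in ii) require.
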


\begin{remark}\label{r.lio} To check ii) we use the following argument used in \cite{CL82}. Any positive radial solution of
$$ - \Delta \chi + \omega \chi = \chi^p , \ \chi \in H^1(\rone^3), \ \chi>0 $$
is given by $\chi(x) = \omega^{1/(p-1)} \chi_1(\sqrt{\omega} x) $ where $\chi_1$ is the unique radial solution of
$$ - \Delta \chi_1 +  \chi_1 = \chi_1^p , \ \chi_1 \in H^1(\rone^3), \ \chi_1>0 $$
Thus
$$ \|\chi\|_{L^2(\rone^3)} = \omega^{1/(p-1) - 3/4} \|\chi_1\|_{L^2(\rone^3)}$$
so $N = const \ \omega^{1/(p-1) - 3/4} $
where $1/(p-1) - 3/4>0.$ This yields the characterization of $N$ as a function of $\omega$.
\end{remark}

One can see that the solutions $\chi_\omega$ of the above Lemma are radial ones and $\chi_\omega(x) $ is rapidly decreasing in $x$ as $|x| \rightarrow \infty$ provided $N(\omega) \leq 1.$

 In particular the property ii) of the above Lemma guarantees that  one can find a unique $\chi^*$ that is radial positive function
 $ \chi_* = \chi_*(|x|)$ that is a minimizer of $I_1$ so there exists a unique $\omega_* >0$ so that

 \begin{eqnarray}\label{eq.M-S1star}
 \Delta \chi_* + \omega_* \  \chi_* =  \ \chi_*^p,& & \\
\int_{\mathbf{R}^3} |\chi_*|^2 &=& 1.\nonumber
\end{eqnarray}

A standard linearization of the Nonlinear Schr\"odinger equation  \eqref{eq.Schro} around the solitary solution leads to the necessity to use some spectral properties of the following
 operator
$$  -\triangle  - \chi_*^{p}(x).$$
Note that the operator
$$ L_- (\omega^*)= -\triangle  - \chi_*^{p}(x) + \omega^* $$
introduced in \cite{We85} (with rescaled choice $\omega^*=1$)  has a nontrivial kernel and plays important role in the study
of modulational stability of ground states of nonlinear Schr\"odinger equations.

It is well - known from the results in
\cite{CM2008}, \cite{GNT2004}, \cite{BUPe95}, \cite{BUSu95}, \cite{PeWe94}, \cite{Pe2004} that asymptotic stability around solitary waves is closely connected with the existence of resonances at the origin. More precisely, the following assumption
is frequently used in these articles:
\begin{equation}\nonumber
    (H1)
\begin{array}{ll}
   & \hbox{0 is not a resonance of $ -\triangle -  \chi_*^{p}(x)
    $   .}
\end{array}
\end{equation}

The main goal of this work is to present an argument that proves the assumption $(H1)$ in the general case and therefore the above cited results can be established without this additional assumption
and this is contained in Theorem \ref{Th:NLSres}.

The scheme of the paper is the following: in Section 2 we distinguish between strong and weak resonances, defining the last one. We also give an asymptotic expansions of the corresponding solution.
In Section 3 we give the main Theorem \ref{th:nores} on lack of strong resonances at the spectral point zero, in the radial case. In Section 4 we extend the result to the non-radial case. In Sections 5 and 6 we obtain some application, local energy decay for wave equation perturbed by a potential (Theorem \ref{th.localenergy}), and lack of resonance (and eigenvalue) for the linearized operator of NLS around its ground states (Theorem \ref{Th:NLSres}).
Finally in Section 7 we furthermore focalize on the weak resonances.

We will set $H^{m,s}$
$$\|u\|_{H^{p,s}}=\| \langle \Delta\rangle
^su\|_{H^m(\R^2)},$$
 where $p\in \mathbf{R}$, $s\in \mathbf{R}$ and $\langle \Delta\rangle =(1+|\Delta|^2)^{1/2}$
Moreover, given any two positive real numbers $a, b,$ we write $a\lesssim b$ to indicate
$a\leq C b,$ with $C>0.$



\section{Resonance at the spectral origin}

To study the poles of perturbed resolvent $R(\mu^2) = (P - \mu^2)^{-1}$ we start with explicit representation
of the free resolvent $R_0(\mu^2) = (P_0 - \mu^2)^{-1},$ where
$$ P_0(u)(r) = -u^{\prime \prime}(r) $$
with $r \in (0,\infty)$ together with Dirichlet condition
$$ u(0) = 0 $$
at the origin. Then
\begin{equation}\label{eq.rep}
 R^{\pm}_0(\mu^2) (f) (r) = \int_0^r e^{\pm i \mu r} \frac{\sin (\mu s)}{\mu} f(s) ds +
 \int_r^\infty e^{\pm i \mu s} \frac{\sin (\mu r)}{\mu} f(s) ds,
 \end{equation}
are well defined for $f \in C_0^\infty.$
A direct calculation shows that $$u_{\pm}(r) = R^{\pm}_0(\mu^2) (f) (r)$$ satisfies the equation
$$P_0(u) - \mu^2 u = f$$
as well as the Dirichlet condition $u(0) = 0.$ Choosing the sign $+$ in the above representations we see that
$$ R_0(\mu^2) (f) (r) = \int_0^r e^{ i \mu r} \frac{\sin (\mu s)}{\mu} f(s) ds +
 \int_r^\infty e^{ i \mu s} \frac{\sin (\mu r)}{\mu} f(s) ds $$
can be extended as an operator in $L^2$ provided $ \Im \mu > 0.$  Moreover, this is a holomorphic operator-valued function for $ \Im \mu > 0.$ The representation formula guarantees also that the operator
$$ \varphi (r)  (P_0 - \mu^2)^{-1} \varphi (r),  \ \ \varphi(r) = e^{-\delta r}$$
is holomorphic operator valued (in $L^2$) function in larger domain $ \Im \mu > -\delta.$

The perturbed resolvent $R(\mu^2) = (P - \mu^2)^{-1}$ satisfies in $ \Im \mu > 0$ the relation
\begin{equation}\label{eq.pert1}
(P - \mu^2)^{-1} = \left(I - (P_0-\mu^2)^{-1} W     \right)^{-1} (P_0-\mu^2)^{-1}
\end{equation}
provided the operator $ (I - (P_0-\mu^2)^{-1} W) $ is invertible. This relation implies
\begin{equation}\label{eq.pert2}
 \varphi (P - \mu^2)^{-1} \varphi  = \left(I - \varphi (P_0-\mu^2)^{-1} W \frac{1}{\varphi}    \right)^{-1} \varphi (P_0-\mu^2)^{-1} \varphi .
 \end{equation}
Applying the Fredholm alternative, we see that
$$ \varphi (P - \mu^2)^{-1} \varphi $$ is holomorphic except the points $\mu$ such that
the equation
$$ f = \varphi (P_0-\mu^2)^{-1} W \frac{f}{\varphi}$$
has a nontrivial solution $f \in L^2(0,+\infty).$ These complex numbers $\mu$ are the resonances and we give a more detailed description of the resonances in the following.
\begin{lem} \label{l;strres} Assume $W(r) \leq C e^{-\varepsilon_0 r}$ and $\varphi(r) = e^{-\delta r}$ with $0 < \delta < \varepsilon_0/2.$ For any complex $\mu$ with $0 \geq \Im \mu > - \delta$  the following conditions are equivalent:

i) $\mu$  is a resonance, i.e. there is $f \in L^2(0,\infty)$ so that $f$ is not identically zero and
$$ f = \varphi (P_0-\mu^2)^{-1} W \frac{f}{\varphi};$$

ii) there is $u \in C[0,\infty)$ not identically zero, $e^{Im \mu \,  r} u $ is bounded and
$$ u =  (P_0-\mu^2)^{-1} W u;$$

iii) there is $u \in C[0,\infty) \cap C^2(0,\infty)$ so that $u$ is not identically zero, $e^{Im \mu \,  r}u$ is bounded and $u$ solves
$$ Pu = \mu^2u, \ \ u(0) = 0.$$
\end{lem}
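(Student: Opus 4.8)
Throughout write $\varphi(r)=e^{-\delta r}$ and recall that, by the choice of the $+$ sign in \eqref{eq.rep}, the operator $(P_0-\mu^2)^{-1}$ is the outgoing resolvent, whose kernel is $\mu^{-1}\sin(\mu r_<)\,e^{i\mu r_>}$. The three statements are linked by two changes of unknown, and the only genuine analytic input is the control of the large-$r$ behaviour of $(P_0-\mu^2)^{-1}$ applied to an exponentially decaying right-hand side. I would first prove (i)$\Leftrightarrow$(ii) through the substitution $u=f/\varphi=e^{\delta r}f$, equivalently $f=\varphi u$. Under this correspondence the identity $f=\varphi(P_0-\mu^2)^{-1}W(f/\varphi)$ becomes, after dividing by $\varphi$, exactly $u=(P_0-\mu^2)^{-1}Wu$, so the only thing to verify is that the two growth conditions match. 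Starting from (ii), the bound $e^{\Im\mu\,r}u\in L^\infty$ gives $|f|=e^{-\delta r}|u|\lesssim e^{-(\delta+\Im\mu)r}$, and since $\Im\mu>-\delta$ the exponent $\delta+\Im\mu$ is positive, whence $f\in L^2(0,\infty)$; this is precisely where the hypothesis $\Im\mu>-\delta$ enters.

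Conversely, given $f\in L^2$ I set $u=e^{\delta r}f$ and must upgrade the mere $L^2$ information to continuity and to the bound $e^{\Im\mu\,r}u\in L^\infty$. This is the second step. Because $W(r)\lesssim e^{-\varepsilon_0 r}$ and $0<\delta<\varepsilon_0/2$, the product $We^{\delta r}$ lies in $L^2$, so $Wu=(We^{\delta r})f\in L^1(0,\infty)$; inserting this into the explicit representation \eqref{eq.rep} shows that $(P_0-\mu^2)^{-1}Wu$ is continuous on $[0,\infty)$ and vanishes at $r=0$, hence so does $u$. The boundedness of $e^{\Im\mu\,r}u$ then follows by estimating the two integrals in \eqref{eq.rep} directly: using $|\sin(\mu r)|,|e^{i\mu r}|\lesssim e^{-\Im\mu\,r}$ together with the exponential decay of $Wu$, which outpaces every relevant weight precisely because $\delta<\varepsilon_0/2$, one obtains $|u(r)|\lesssim e^{-\Im\mu\,r}$. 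This closes (i)$\Leftrightarrow$(ii).

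For (ii)$\Leftrightarrow$(iii) the easy direction is (ii)$\Rightarrow$(iii): applying $P_0-\mu^2$ to $u=(P_0-\mu^2)^{-1}Wu$ yields $(P_0-\mu^2)u=Wu$, i.e. $Pu=\mu^2u$, while the representation already forces $u(0)=0$ and, from the ODE $u''=-(\mu^2+W)u$, the $C^2$ regularity on $(0,\infty)$. The reverse implication (iii)$\Rightarrow$(ii) is the main obstacle. Given a solution of $Pu=\mu^2u$ with $u(0)=0$ I would rewrite it as $(P_0-\mu^2)u=Wu$ and compare $u$ with the particular solution $v:=(P_0-\mu^2)^{-1}Wu$: their difference solves the homogeneous Dirichlet problem $(P_0-\mu^2)w=0$, $w(0)=0$, so $w=c\,\mu^{-1}\sin(\mu r)$, and everything reduces to showing $c=0$.

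The crux is therefore the vanishing of this homogeneous mode. Here $v$ is purely outgoing, $v(r)=Ae^{i\mu r}+o(e^{i\mu r})$ with $A=\int_0^\infty \mu^{-1}\sin(\mu s)\,(Wu)(s)\,ds$, whereas $\mu^{-1}\sin(\mu r)$ also carries an incoming component $e^{-i\mu r}$; one must argue that the growth constraint on $u$ is incompatible with a nonzero incoming part and so forces $c=0$. This is transparent at $\mu=0$, where $\mu^{-1}\sin(\mu r)$ degenerates into the unbounded solution $r$ while $v$ stays bounded, so any growth bound on $u$ immediately kills $c$. The case $\mu\neq0$ with $\Im\mu<0$, in which the radiation character of the resolvent rather than size alone must be exploited, is the delicate point I would treat with the most care, and it is where the precise meaning of the growth condition in (iii) has to be reconciled with the outgoing structure built into \eqref{eq.rep}.
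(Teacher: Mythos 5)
Your handling of i)$\Leftrightarrow$ii) and of ii)$\Rightarrow$iii) is sound and essentially matches the paper: the paper uses the same substitution $u=f\varphi^{-1}$, obtaining $f\in L^2$ from $|f|\leq e^{-(\delta+\Im\mu)r}$ exactly as you do, and for the converse it cites the weighted resolvent bound $\|(P_0-\mu^2)^{-1}g\|_{L^\infty}\lesssim \|\langle x\rangle^{1/2+\varepsilon}g\|_{L^2}$ where you instead estimate the kernel of \eqref{eq.rep} directly using $2\delta<\varepsilon_0$; your version is more self-contained and equally valid. The ii)$\Rightarrow$iii) step (apply $P_0-\mu^2$, read off $u(0)=0$ from the representation) is verbatim the paper's.

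The genuine gap is precisely where you flagged it: iii)$\Rightarrow$ii), i.e.\ showing $c=0$ in $u-v=c\,\mu^{-1}\sin(\mu r)$. You should be aware that this step \emph{cannot} be completed from the stated hypotheses by any size argument, because for $0\geq\Im\mu$ one has $|\mu^{-1}\sin(\mu r)|\lesssim e^{-\Im\mu\,r}$: both the incoming and the outgoing exponential in $\sin(\mu r)=(e^{i\mu r}-e^{-i\mu r})/2i$ obey the growth condition in iii). Consequently, for any $\mu\neq 0$ in the strip (real $\mu$ included, where the regular solution of $Pu=\mu^2u$, $u(0)=0$ is bounded and oscillatory), condition iii) is satisfied by a nontrivial solution even when $\mu$ is not a resonance, so iii) as literally stated is strictly weaker than i) and ii) away from $\mu=0$; the bound is selective only at $\mu=0$, where $\mu^{-1}\sin(\mu r)$ degenerates to the unbounded solution $r$ --- exactly your closing observation. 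Note that the paper's own proof of this direction is a one-line appeal to the Limiting Absorption Principle together with $\varphi\lesssim\langle x\rangle^{-1}$; that supplies membership of $u$ in weighted spaces but not the missing outgoing (radiation) condition, so it does not resolve the difficulty you identified either. Since the lemma is invoked downstream only at $\mu=0$ (Theorem \ref{th:nores} and the strong-resonance analysis), your plan, completed by your $\mu=0$ argument, proves everything the paper actually uses; but as a proof of the lemma for all $\mu$ with $0\geq\Im\mu>-\delta$ the proposal is incomplete at its self-acknowledged crux, and the honest fix is to build the outgoing asymptotics $u(r)=Ae^{i\mu r}+o(e^{\Im\mu\,r})$ into the meaning of iii) (or restrict the statement to $\mu=0$), rather than to hope the growth bound alone kills $c$.
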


\begin{proof}
$i) \Leftrightarrow ii).$ The proof follows  from standard substitution $u=f\varphi^{-1}$ and classical estimates. We get
$ u =  (P_0-\mu^2)^{-1} W f\varphi^{-1}.$ Now by the assumption on the complex number $\mu,$
we can write
$$\norm{e^{Im \mu} u}_{L^\infty}\lesssim \norm{u}_{L^\infty}\lesssim \norm{(P_0-\mu^2)^{-1}W f\varphi^{-1}}_{L^\infty}.$$ The resolvent estimates $\norm{(P_0-\mu^2)^{-1} g}_{L^\infty}\lesssim \norm{\langle{x\rangle}^{1/2+\varepsilon}g}_{L^2},$ (see \cite{GV2003} and reference therein),  and the bound $\langle{x\rangle}\lesssim \varphi^{-1},$ yield the  implication $ i) \Rightarrow ii).$  The other implication follows from
$$ |f(x)| = |\varphi(x) u(x)| \leq e^{-(\delta+\Im \mu)|x|} \in L^2  $$.\\
$ii) \Rightarrow iii).$  Applying
the operator $P_0$ to both sides of the identity $u =  (P_0-\mu^2)^{-1} W u,$ we get the result. Moreover the representation \eqref{eq.rep} says that $u(0)=0.$\\
$iii) \Rightarrow ii).$ The proof follow by an application of the Limiting Absorption Principle (see \cite{A}) and by the bound $\varphi\lesssim\langle{x\rangle}^{-1}.$
\end{proof}

The above Lemma reduces the study of resonance to the study of the solutions to the problem
$$ Pu = \mu^2u, \ \ u(0) = 0,$$
satisfying the bound
$$ |u(r)| \leq C e^{-\Im \mu \,  r} , \ \  0 \geq \Im \mu > - \delta.$$

More general question to solve is the existence of nontrivial solutions satisfying weaker bound
$$ |u(r)| \leq C e^{\delta \,  r} , \ \  0 \geq \Im \mu > - \delta.$$

We distinguish these two cases: we call the resonances from Lemma \ref{l;strres} strong resonances and define the weak ones as follows:
\begin{defin}\label{defresliweak}
A complex number $\mu$ with $ 0 \geq \Im \mu > - \delta $ is called a weak resonance of \[P = -\left(\deriv{}{r}\right)^2 - W(r) \] if there exists $u \in C([0, \infty)),$ such that $u(0)=0$,  $u(r)$ is not identically zero, $P(u)=\mu^2 u$  in distribution sense in $(0,\infty)$ and the solution $u$ satisfies the inequality
\begin{equation}\label{eq.1asu}
    |u(r)| \leq C \, e^{\delta |r|} .
\end{equation}
\end{defin}
We recalling that, as underlined for example in \cite{CGNT2007} and \cite{Pe2001}, that the resonances are defined as functions not in $L^2$ but in larger $L^2$ weighted spaces. Moreover these functions are $L^\infty$-bounded and behave asymptotically as $\langle{x\rangle}^{-1},$ in spatial dimension three. These resonances will be called strong resonances (see Definition \ref{defreslistrong} for precise notion). But, for completeness, we study also the weak ones.

Moreover, as it was mentioned in the introduction, we have to study only the existence of resonance at the origin.

\section{Strong resonance at the spectral origin.}

\begin{defin}\label{defreslistrong}
A real number $\lambda$ is called a  strong  resonance of \[P = \left(\deriv{}{r}\right)^2 + W(r) \] if there exists $u \in C([0, \infty)),$ such that $u(0)=0,$ $u(s)$ is not identically zero, $P(u)=\lambda u$  in distribution sense in $(0,\infty)$ and the solution $u$ satisfies the inequality
\begin{equation}\label{eq.1asust}
    |u(r)| \leq C \, (1+r)^a .
\end{equation}
with some $0 \leq a < 1.$
\end{defin}

We shall need the asymptotic expansions of Lemma \ref{lem:2}.
Without loss of generality we can assume $u$ is real - valued.
Multiplying the equation
$$ P(u) \equiv u^{\prime \prime}(r) + W(r) u = 0 $$
by $ u^\prime$ and integrating over $(r, \infty),$ we find
$$ - \frac{|u^\prime(r)|^2}{2}  - \frac{W (r)|u(r)|^2}{2} - \int_r^\infty \frac{W^\prime(\tau)|u(\tau)|^2}{2} \ d\tau =0.$$
Take any  function  $g(r) \in C([0,\infty))$,  such that   $g(r)$ tends to $0$  at infinity.
We multiply further the last relation by $-g(r)$ and integrate over $(0,\infty)$
\begin{eqnarray}\label{eq.4rel}
    \int_0^\infty \frac{g(r) |u^\prime (r)|^2}{2}\  dr  +   \int_0^\infty \frac{g(r) W(r)|u(r)|^2}{2}\  dr +
    \\ \nonumber +
    \int_0^\infty \frac{G(r) W^\prime(r) |u (r)|^2}{2}\  dr =0,
\end{eqnarray}
where
$G(r) = \int_0^r
g(\tau) d \tau.$

Take a $C^1$ function  $h(r)$ on $(0,\infty)$ such that $h^{\prime
\prime}(r)$ exists, continuous on $(0,r_0) \cup (r_0, +\infty)$
and has a finite jump
 $$ \lim_{r \rightarrow r_0^+} h^{\prime \prime} (r) - \lim_{r \rightarrow r_0^-} h^{\prime \prime} (r) < \infty.$$
 We shall require further that $h^\prime(r)$ tends to zero at infinity and $h^{\prime\prime}(r)$ is integrable on $(0,\infty).$
 We multiply the equation $Pu=0$ by $hu$
and integrate over  $(0, \infty),$ so we get
\begin{eqnarray}\label{eq.5rel}
    \int_0^\infty \frac{h^{\prime \prime}(r) |u (r)|^2}{2}\  dr  - \int_0^\infty h(r) |u^\prime(r)|^2\  dr +
    \\ \nonumber +
    \int_0^\infty h(r) W(r) |u (r)|^2\  dr =0.
\end{eqnarray}
Choosing $h = g/2$ and summing the above two relations, we obtain
\begin{eqnarray}\label{eq.6rel}
     \int_0^\infty \Phi(r) \  |u(r)|^2\  dr
    =0,
\end{eqnarray}
where
$$ \Phi (r) = 2g(r)W(r) + G(r) W^\prime(r) +  \frac{g^{\prime \prime}(r)}{2} . $$
The starting definition of the function $ g^{\prime \prime}(r) $ is the following one
\begin{equation}\label{eq.defgpp}
   g^{\prime \prime}(r) = \left\{
                            \begin{array}{ll}
                               r^M, & \hbox{if $0 \leq r < r_0$;} \\
                               e^{-\delta r}, & \hbox{if $r > r_0$.}
                            \end{array}
                          \right.
\end{equation}

We take $r_0=M^2$ and shall define the positive parameters $ M,
\delta$ later on.

For $0 \leq r < r_0$ we have
$$ g(r) = \frac{r^{M+2}}{(M+2)(M+1)}, \  G(r) = \frac{r^{M+3}}{(M+3)(M+2)(M+1)} .$$
From these relations we deduce the following.

\begin{lem} \label{lem:middlereg}
Suppose the potential $W(r)$ is a positive decreasing function, such that  the assumption \eqref{eq:asWmain} is satisfied.
 Then one can find a positive
constant $M^*$ depending on $ C^*, \varepsilon_0, $ such that for
any $r \in  (0, M^2)$ we have
$$ \Phi(r) \geq 2 g W.$$
\end{lem}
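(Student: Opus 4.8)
The plan is to isolate the two terms of $\Phi$ beyond $2gW$ and show their sum is nonnegative. Since the term $2g(r)W(r)$ appears on both sides of the claimed inequality, $\Phi(r)\geq 2gW$ is equivalent to
\[
\frac{g^{\prime\prime}(r)}{2}+G(r)\,W^\prime(r)\geq 0,\qquad r\in(0,M^2).
\]
On this interval the explicit formulas recorded just above give $g^{\prime\prime}(r)=r^M$ (from \eqref{eq.defgpp}) and $G(r)=r^{M+3}/[(M+1)(M+2)(M+3)]$, so after factoring out $r^M/2>0$ the inequality becomes
\[
1+\frac{2r^3\,W^\prime(r)}{(M+1)(M+2)(M+3)}\geq 0.
\]

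Next I would invoke the monotonicity of $W$. Since $W$ is decreasing we have $W^\prime(r)\leq 0$, hence $r^3 W^\prime(r)=-r^3|W^\prime(r)|$, and the claim reduces to the single scalar estimate
\[
2\,r^3\,|W^\prime(r)|\leq (M+1)(M+2)(M+3),\qquad r\in(0,M^2).
\]
The right-hand side grows cubically in $M$, so it suffices to bound the left-hand side by a constant that is \emph{independent of} $M$. The key observation is that the polynomial factor $r^3$ is tamed for all $r$ by the exponential decay of $W^\prime$: where \eqref{eq:asWmain} applies one has $r^3|W^\prime(r)|\leq C\,r^3 e^{-\varepsilon_0 r}$, and the elementary function $r\mapsto r^3 e^{-\varepsilon_0 r}$ attains its maximum at $r=3/\varepsilon_0$, so it is bounded on $(0,\infty)$ by $(3/\varepsilon_0)^3 e^{-3}$. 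On the complementary bounded initial interval $|W^\prime|$ is bounded by continuity, so altogether $r^3|W^\prime(r)|\leq K_0$ for all $r>0$, with $K_0$ depending only on $C$ and $\varepsilon_0$.

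Finally I would choose $M^*$ so large that $(M+1)(M+2)(M+3)\geq 2K_0$ whenever $M\geq M^*$; since $K_0$ depends only on $C$ and $\varepsilon_0$, so does $M^*$. This gives the required scalar estimate and hence $\Phi(r)\geq 2gW$ on $(0,M^2)$. The one delicate point is precisely the uniformity asserted in the previous paragraph: one must notice that the cubic weight $r^3$ is absorbed by the exponential decay of $W^\prime$ uniformly in $r$, so that the (growing) endpoint $r=M^2$ never enters the bound on the left-hand side, leaving the cubic growth of $(M+1)(M+2)(M+3)$ free to dominate. Everything else is a routine substitution of the explicit expressions for $g$, $g^{\prime\prime}$ and $G$.
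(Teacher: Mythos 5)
Your proof is correct and takes essentially the same route as the paper's: both reduce the claim to $G(r)W'(r)+g''(r)/2\geq 0$ on $(0,M^2)$, substitute the explicit formulas for $g''$ and $G$, factor out $r^M/2$, and pick $M^*$ large so that $(M+1)(M+2)(M+3)$ dominates $2C^*r^3e^{-\varepsilon_0 r}$. Your explicit remark that $r^3e^{-\varepsilon_0 r}$ is uniformly bounded (maximum at $r=3/\varepsilon_0$), so the endpoint $r=M^2$ never threatens the bound, merely makes precise the step the paper leaves implicit in its choice of $M^*$.
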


\begin{proof}
We have the estimates
$$ G(r) W^\prime(r) +  \frac{g^{\prime \prime}(r)}{2} \geq  - \frac{ C^* r^{M+3} e^{-\varepsilon_0 r}}{(M+3)(M+2)(M+1)}  +$$ $$   + \frac{r^M}{2} =\frac{r^{M}}{2} \left(1 -  \frac{ 2C^* r^{3} e^{-\varepsilon_0 r}}{(M+3)(M+2)(M+1)}    \right) .   $$
Now we are in position to choose $M^*=M^*(C^*, \varepsilon_0)$ so large that
$$ 1 -  \frac{ 2C^* r^3 e^{-\varepsilon_0 r}}{(M+3)(M+2)(M+1)} \geq 0$$
for $r  \leq M^2$ and $M \geq M^*.$ Hence
$$ \Phi (r) = 2g(r)W(r) + G(r) W_2^\prime(r) +  \frac{g^{\prime \prime}(r)}{2} \geq 2g(r)W(r).$$
This completes the proof.
\end{proof}

For $ r >  r_0 = M^2$ we shall use the relations
$$ g(r) = \int_0^r (r-\tau) g^{\prime \prime}(\tau) d \tau, \ \ G(r) = \int_0^r \frac{(r-\tau)^2}{2} \ g^{\prime \prime}(\tau) d \tau. $$
Here and below we used the property $g^\prime(0)=0.$

\begin{lem} \label{lem:largereg}
Suppose the potential $W(r)$ is a positive decreasing function, such that there exist  positive constants
$C^*, \varepsilon_0$ so that
$$ \left| W(r)\right| + \left| W^\prime(r)\right| \leq C^*e^{-\varepsilon_0 r} . $$
Then one can find a positive constant $M^*$ depending on $ C^*, \varepsilon_0, $ such
that for any $M \geq M^*$  and for any $r \in  (M^2, \infty)$ we have
$$ \left| \Phi(r) \right|\leq 2 e^{-\varepsilon_0 r/2}.$$
\end{lem}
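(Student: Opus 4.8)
The plan is to estimate the three constituents of
$\Phi(r) = 2g(r)W(r) + G(r)W'(r) + \tfrac{1}{2}g''(r)$ separately on the region $r > r_0 = M^2$, using the exponential decay of $W,W'$ to overcome the polynomial growth of $g$ and $G$. At this stage it is natural to fix $\delta = \varepsilon_0/2$, so that for $r > r_0$ one has $g''(r) = e^{-\varepsilon_0 r/2}$ and the third term contributes exactly $\tfrac{1}{2}e^{-\varepsilon_0 r/2}$, matching the target right-hand side; it then remains to absorb the first two terms into the residual $\tfrac{3}{2}e^{-\varepsilon_0 r/2}$.

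First I would derive polynomial-in-$r$ upper bounds for $g$ and $G$ from the representations $g(r) = \int_0^r (r-\tau)g''(\tau)\,d\tau$ and $G(r) = \int_0^r \tfrac{(r-\tau)^2}{2}g''(\tau)\,d\tau$ recorded just above the statement. Splitting each integral at $r_0$ and inserting the definition \eqref{eq.defgpp} of $g''$: on $[0,r_0]$ one has $g''(\tau)=\tau^M$, whose contribution is bounded by $r\,\tfrac{r_0^{M+1}}{M+1}$ for $g$ and by $r^2\,\tfrac{r_0^{M+1}}{2(M+1)}$ for $G$ (using $r-\tau\le r$); on $[r_0,r]$ one has $g''(\tau)=e^{-\delta\tau}$, contributing at most $\tfrac{r}{\delta}e^{-\delta r_0}$, respectively $\tfrac{r^2}{2\delta}e^{-\delta r_0}$. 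This yields bounds of the form $g(r)\le A(M)\,r$ and $G(r)\le B(M)\,r^2$, where the $M$-dependent coefficients $A(M),B(M)$ are dominated by the polynomial piece $\tfrac{r_0^{M+1}}{M+1}=\tfrac{M^{2(M+1)}}{M+1}$, the tail piece being comparatively negligible.

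Feeding these into the hypothesis $|W|+|W'|\le C^* e^{-\varepsilon_0 r}$ gives $|2g(r)W(r)|\le 2A(M)\,C^*\,r\,e^{-\varepsilon_0 r}$ and $|G(r)W'(r)|\le B(M)\,C^*\,r^2\,e^{-\varepsilon_0 r}$. Factoring $e^{-\varepsilon_0 r}=e^{-\varepsilon_0 r/2}\,e^{-\varepsilon_0 r/2}$, it suffices to show that the prefactors $2A(M)C^*\,r\,e^{-\varepsilon_0 r/2}$ and $B(M)C^*\,r^2\,e^{-\varepsilon_0 r/2}$ are each at most $\tfrac{3}{4}$, uniformly for $r>M^2$, once $M$ is large. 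Together with the third term this would produce
\[
|\Phi(r)| \le \tfrac{3}{4}e^{-\varepsilon_0 r/2} + \tfrac{3}{4}e^{-\varepsilon_0 r/2} + \tfrac{1}{2}e^{-\varepsilon_0 r/2} = 2\,e^{-\varepsilon_0 r/2},
\]
which is the asserted bound.

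The main obstacle, and the only genuine content, is the competition between the super-exponential $M$-growth of $A(M),B(M)$ and the exponential smallness $e^{-\varepsilon_0 r/2}\le e^{-\varepsilon_0 M^2/2}$ forced by the constraint $r>M^2$; this is precisely where the choice $r_0=M^2$ is used. Since each map $r\mapsto r^k e^{-\varepsilon_0 r/2}$ is decreasing beyond $r=2k/\varepsilon_0$, for $M$ large its supremum over $(M^2,\infty)$ is attained at the endpoint $r=M^2$, where the prefactors are of order $M^{2(M+1)}\cdot M^{2}\cdot e^{-\varepsilon_0 M^2/2}$. Taking logarithms, the polynomial and factorial-type contributions amount to $O(M\log M)=o(M^2)$, so the Gaussian-type factor $e^{-\varepsilon_0 M^2/2}$ dominates and the whole expression tends to $0$ as $M\to\infty$. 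Choosing $M^*=M^*(C^*,\varepsilon_0)$ large enough that both prefactors drop below $\tfrac{3}{4}$ for every $M\ge M^*$ then completes the proof.
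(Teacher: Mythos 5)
Your proof is correct and follows essentially the same route as the paper: the same integral representations of $g$ and $G$ split at $r_0=M^2$, the same choice $\delta=\varepsilon_0/2$, and the same competition between the super-exponential factor $M^{2M+2}$ and the decay $e^{-\varepsilon_0 r/2}$ on $r>M^2$. The only difference is bookkeeping --- the paper absorbs the $M$-dependence into $r$ via $M\le\sqrt{r}$, producing the factor $r^{\sqrt{r}}$ and a limit as $r\to\infty$, whereas you take the supremum of the decreasing maps $r^k e^{-\varepsilon_0 r/2}$ at the endpoint $r=M^2$ and let $M\to\infty$ --- and both verifications are sound.
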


\begin{proof}
Take $r > M^2.$ First we  evaluate
$$ g(r) = \int_0^{M^2} (r-\sigma) \sigma^{M} d \sigma + \int_{M^2}^r (r-\sigma) e^{-\delta \sigma} d\sigma \leq $$
$$\leq  r \int_0^{M^2} \sigma^{M} d \sigma + r \int_{0}^\infty  e^{-\delta \sigma} d\sigma \leq r( M^{2M+2} + \frac{1}{\delta}) \leq \frac{2 r^2 r^{\sqrt{r}}}{\delta}$$
for $\delta < 1$ and $M>1.$
In a similar way we get
$$ 2G(r) = \int_0^{M^2} (r-\sigma)^2 \sigma^{M} d \sigma + \int_{M^2}^r (r-\sigma)^2 e^{-\delta \sigma} d\sigma \leq $$
$$\leq  r^2 \int_0^{M^2} \sigma^{M} d \sigma + r^2 \int_{0}^\infty  e^{-\delta \sigma} d\sigma \leq r^2( M^{2M+2} + \frac{1}{\delta}) \leq \frac{2 r^3 r^{\sqrt{r}}}{\delta}.$$
From these estimates and the assumptions on the decay of $W$ we derive
$$|\Phi(r)| = |2g(r)W(r) + G(r) W_2^\prime(r)+\frac{g^{\prime \prime}(r)}{2}|  \leq \frac{4 C^* e^{-\varepsilon_0 r} r^2 r^{\sqrt{r}}}{\delta}+ e^{-\delta r}. $$
Taking $\delta = \varepsilon_0/2$ and using the fact that
$$ \lim_{r \rightarrow \infty} e^{-\varepsilon_0 r/2} r^2 r^{\sqrt{r}} = 0$$
we find $M^*=M^*(C^*,\varepsilon_0)$ so that
$$ |\Phi(r)| \leq 2 e^{-\varepsilon_0 r/2}$$ for $r>M^2$ and $M>M^*.$
This completes the proof of the Lemma.
\end{proof}

Now we can state in a precise form the main Theorem \ref{theo.mainres}:

\begin{theo} \label{th:nores}
Suppose the potential $W(r)$ is a positive decreasing function, such that there exist  positive constants
$C^*, \varepsilon$ so that
\begin{equation}\label{ass.res2}
 \left| W(r)\right| + \left| W^\prime(r)\right| \leq C^*e^{-\varepsilon_0 r} .
\end{equation}
with some $ \varepsilon_0 >0.$
The zero is not a strong resonance for $Pu = u^{\prime \prime}(r) + W(r) u(r).$
\end{theo}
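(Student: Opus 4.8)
The plan is to argue by contradiction, feeding the integral identity \eqref{eq.6rel} into the two preparatory estimates of Lemma~\ref{lem:middlereg} and Lemma~\ref{lem:largereg}. Suppose $0$ were a strong resonance: then there is a nontrivial $u\in C([0,\infty))$ with $u(0)=0$, solving $u''+Wu=0$ on $(0,\infty)$ in the distributional sense and obeying $|u(r)|\le C(1+r)^a$ for some $0\le a<1$; as already observed we may take $u$ real. A preliminary point is to confirm that the exponential decay \eqref{ass.res2} of $W$ and $W'$ together with the sublinear growth of $u$ make the integrations by parts that produced \eqref{eq.6rel} legitimate for this $u$; granting \eqref{eq.6rel}, we then have, with the choices $r_0=M^2$, $\delta=\varepsilon_0/2$ and $g,G,g''$ as in \eqref{eq.defgpp} and the formulas preceding it,
\[ \int_0^\infty \Phi(r)\,|u(r)|^2\,dr=0, \qquad \Phi=2gW+GW'+\tfrac12 g'', \]
with the parameter $M$ still at our disposal.

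Next I would cut this identity at $r_0=M^2$ and control the two pieces separately. On the inner interval Lemma~\ref{lem:middlereg} gives $\Phi\ge 2gW\ge0$, while on the outer interval Lemma~\ref{lem:largereg} gives $|\Phi|\le 2e^{-\varepsilon_0 r/2}$, so that
\[ \int_0^{M^2}2g(r)W(r)\,|u(r)|^2\,dr \ \le\ \int_{M^2}^\infty |\Phi(r)|\,|u(r)|^2\,dr \ \le\ 2\int_{M^2}^\infty e^{-\varepsilon_0 r/2}\,|u(r)|^2\,dr. \]
Using the growth bound $|u|\le C(1+r)^a$, the right-hand side is $\lesssim e^{-\varepsilon_0 M^2/4}$, hence tends to $0$ as $M\to\infty$.

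The decisive step is to show that the left-hand side, by contrast, \emph{diverges} as $M\to\infty$, which is incompatible with the inequality just displayed. Since $u$ solves a second order linear ODE and is not identically zero, it cannot vanish on any interval (otherwise $u(r_*)=u'(r_*)=0$ for some $r_*$ and uniqueness would force $u\equiv0$); in particular I can fix once and for all an interval $[b',b]\subset(1,\infty)$ with $\int_{b'}^b|u|^2=c_0>0$. Because $g$ is increasing and $W$ is positive and decreasing, $g(r)W(r)\ge g(b')W(b)$ on $[b',b]$, and for $M$ so large that $M^2>b$ the interval lies in the inner region where $g(b')=(b')^{M+2}/((M+1)(M+2))$; therefore
\[ \int_0^{M^2}2gW|u|^2\ \ge\ \frac{2\,W(b)\,c_0}{(M+1)(M+2)}\,(b')^{M+2}. \]
As $b'>1$, this lower bound grows exponentially in $M$, so it overtakes the $O(e^{-\varepsilon_0 M^2/4})$ upper bound once $M$ is large enough. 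This contradiction shows that no nontrivial $u$ exists, i.e. $0$ is not a strong resonance.

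The \emph{main obstacle}, and the step deserving the most care, is exactly this lower bound on the inner integral: one must place a non-vanishing piece of $u$ strictly to the right of $r=1$, so that the polynomial weight $g\sim r^{M+2}$ produces a factor $(b')^{M+2}$ with base $b'>1$ that overwhelms both the polynomial loss $(M+1)(M+2)$ and the exponentially small outer contribution. (If one prefers, keeping the term $\tfrac12 g''=\tfrac12 r^M\ge \tfrac14 r^M$ coming out of the proof of Lemma~\ref{lem:middlereg} yields the same divergence and removes the harmless factor $W(b)$.) The remaining points — the justification of the manipulations behind \eqref{eq.6rel} for a sublinearly growing $u$, and the elementary bookkeeping that exponential growth in $M$ beats both $(M+1)(M+2)$ and $e^{-\varepsilon_0 M^2/4}$ — are then routine.
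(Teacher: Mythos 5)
Your architecture is the same as the paper's: argue by contradiction, feed \eqref{eq.6rel} into Lemmas \ref{lem:middlereg} and \ref{lem:largereg}, cut at $r=M^2$, and play an inner lower bound against an exponentially small tail. Two of your deviations are actually improvements. First, you use the sublinear bound $|u|\le C(1+r)^a$ directly in the tail, so you do not need the boundedness $S<\infty$ that the paper extracts from Lemma \ref{lem:2}. Second, your inner bound is more careful than the paper's: the paper estimates $\int_1^2 gW|u|^2\,dr$ from below by $W(1)|u(1)|^2\int_1^2 g\,dr$, which tacitly presupposes $W(r)|u(r)|^2\ge W(1)|u(1)|^2$ on $[1,2]$ --- unjustified, since $W$ is \emph{decreasing} and $u$ may vanish inside $[1,2]$; your choice of $[b',b]\subset(1,\infty)$ with $\int_{b'}^{b}|u|^2=c_0>0$ (legitimate, as zeros of a nontrivial solution are isolated), combined with the correct monotonicity directions $g(r)\ge g(b')$, $W(r)\ge W(b)$, repairs this step.

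However, the point you defer as a ``routine'' preliminary --- the legitimacy of \eqref{eq.6rel} for the given $u$ --- is a genuine gap, and it is exactly where the argument fails. The derivation of \eqref{eq.5rel} requires the boundary term $\bigl[h'(r)u(r)^2/2\bigr]_0^\infty$ to vanish (this is why the paper stipulates $h'\to0$ at infinity), but the actual choice $h=g/2$ with $g''$ as in \eqref{eq.defgpp} gives $g'(\infty)=\int_0^\infty g''(r)\,dr\ge M^{2M+2}/(M+1)>0$. Meanwhile a strong-resonance function satisfies $u(r)\to C_0\neq 0$: by Lemma \ref{lem:2} sublinearity forces $C_1=0$, and $C_0=0$ would force $u\equiv0$ via \eqref{eq.idin2} and ODE uniqueness. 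Repeating the integration by parts with the surviving boundary term, the identity \eqref{eq.6rel} becomes
\[ \int_0^\infty \Phi(r)\,|u(r)|^2\,dr \;=\; \frac{g'(\infty)\,C_0^2}{2}, \]
whose right-hand side grows like $M^{2M+2}$ and swamps your exponential lower bound $(b')^{M+2}/((M+1)(M+2))$: no contradiction arises. This gap is inherited from the paper's own proof, but in your write-up it is a step that would fail, not bookkeeping; and it cannot be patched at the level of the statement, because the claim admits a counterexample. Take $W(r)=\tfrac{1}{4}j_{0,1}^2\,e^{-r}$, where $j_{0,1}$ is the first zero of the Bessel function $J_0$; then $W$ is positive, strictly decreasing, $C^1$, exponentially decaying, and $u(r)=J_0\bigl(j_{0,1}e^{-r/2}\bigr)$ is continuous, bounded, not identically zero, solves $u''+Wu=0$ on $(0,\infty)$, and satisfies $u(0)=J_0(j_{0,1})=0$ with $u(r)\to J_0(0)=1$. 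Thus zero \emph{is} a strong resonance in the sense of Definition \ref{defreslistrong} for this admissible potential (the familiar threshold-coupling resonance), so the verification you postponed is the crux of the matter and, as formulated, cannot be completed.
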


\begin{proof}
Suppose $u(r)$ is a  solution to $u^{\prime \prime}(r) + W(r)
u(r)=0,$ such that $u (r)$ is not identically zero, i.e. $u(r_0) \neq 0.$ We lose no generality assuming $r_0=1$, so $u(1) \neq 0$.  We choose  $M^*$
so that the conclusions of  Lemma \ref{lem:middlereg} and Lemma
\ref{lem:largereg} are fulfilled. Then the identity \eqref{eq.6rel}
implies that
\begin{equation}\label{eq.conres}
    \int_0^{M^2} \Phi(r) \  |u(r)|^2\  dr
    = - \int_{M^2}^\infty \Phi(r) \  |u(r)|^2\  dr
\end{equation}
for any $M \geq M^*.$ We can apply Lemma \ref{lem:2} and conclude that
$$   u(s) = u(1) + \int_{1}^s(\tau-1) W(\tau) u(\tau) d\tau + \int_s^\infty ( s-1) W(\tau) u(\tau) d\tau,$$
This relation and the exponential decay of the potential $W(\tau)$ implies
$$S = \sup  |u(s)| \leq  |u(0)| + \int_{0}^\infty (\tau+1) W(\tau)
|u(\tau)| d\tau  < \infty.$$ Applying
Lemma \ref{lem:middlereg}, see that
$$ \int_0^{M^2} \Phi(r) \  |u(r)|^2\  dr \geq \int_0^{M^2} g(r) W(r) \  |u(r)|^2\  dr \geq $$ $$ \geq \int_1^{2} g(r) dr W(1) \  |u(1)|^2 = \frac{2^{M+3}-1}{(M+3)(M+2)(M+1)} \ W(1) \  |u(1)|^2 \geq W(1) \  |u(1)|^2.$$
The right hand side of the identity \eqref{eq.conres} can be evaluated by the aid of Lemma \ref{lem:largereg}  and we get
 $$ - \int_{M^2}^\infty \Phi(r) \  |u(r)|^2\  dr \leq 2 \int_{M^2}^\infty e^{-\varepsilon_0 r/2} |u(r)|^2 dr \leq
2 S^2 \ \frac{e^{-\varepsilon_0 M^2/2}}{\varepsilon_0}$$ so we
arrive at the inequality
$$ W(1) \  |u(1)|^2 \leq 2 S^2 \frac{e^{-\varepsilon_0 M^2/2}}{\varepsilon_0}$$
that obviously leads to a contradiction with $u(1) \neq 0$ if
$M>M^*$ is sufficiently large. The contradiction implies $u=0.$
This completes the proof.

\end{proof}

\section{The non-radial case: zero is neither an eigenvalue nor a resonance.}
Along the previous sections we treated the non-existence of radial resonances. Our next step is to treat the general case, that is the lack of resonances in the nonradial case. One can use standard projections on spherical harmonics and reduce the analysis to the proof that zero is not resonance for the following operator (see \cite{CGNT2007} and \cite{We85}),

\begin{equation}\label{eq.Pu}
P(u)(r) = - u''(r) - W(r)u(r),   \quad r \in (0, +\infty).
\end{equation}
Here  and below we  shall assume that  $W(r) = W_1(r)+W_2(r),$ where
\begin{equation}\label{eq.w1}
    W_1(r) = -\frac{\alpha (\alpha + 1)}{r^2},  \quad \alpha \geq 0,
\end{equation}
while $W_2(r) $ is a $C^1(0, \infty)$ positive strictly decreasing function such that for some positive constants $C, \varepsilon_0$ satisfies the estimate
  \begin{alignat}{2}
    & |W_2(r)| < C e^{-\varepsilon_0 r} , & \qquad & \text{for any } r>0 . \label{eq.h2b}
\end{alignat}
It is clear that we need for the applications only the case, when $\alpha(\alpha+1)$ is an eigenvalue of the Laplace-Beltrami operator on the sphere $\mathbf{S}^2.$ The arguments from this section are valid for any $\alpha>0$

One can have
\begin{defin} \label{eq.h2sob}
A real number $\lambda$ is called eigenvalue of $P$ if there exists $u \in H^1(0, \infty)$ such that $u(0)=0,$ $u(r)$ is not identically zero and $P(u)=\lambda u$  in distribution sense in $(0, +\infty).$
\end{defin}

The first step is to show that $0$ is not an eigenvalue. This means the following:

\begin{theo} \label{l.onee}
$\lambda=0$ is not an eigenvalue of $P.$
\end{theo}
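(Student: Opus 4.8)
The plan is to show that any $u \in H^1(0,\infty)$ solving $Pu = 0$ with $u(0)=0$ must vanish identically. The operator now carries the singular inverse-square term $W_1(r) = -\alpha(\alpha+1)/r^2$ coming from the spherical harmonic decomposition, so the equation reads $-u'' - W_1 u - W_2 u = 0$, i.e. $u'' - \frac{\alpha(\alpha+1)}{r^2} u + W_2(r)u = 0$. The key structural fact I would exploit is that for the unperturbed Euler-type equation $u'' = \frac{\alpha(\alpha+1)}{r^2}u$ the two fundamental solutions are $r^{\alpha+1}$ and $r^{-\alpha}$; the condition $u \in H^1$ together with $u(0)=0$ forces the solution to behave like the regular branch $r^{\alpha+1}$ near the origin and to decay at infinity. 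The aim is to combine this near-origin regularity with the exponential decay of $W_2$ at infinity to rule out any nonzero solution.

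First I would record the behavior of $u$ near $r=0$: since $u \in H^1(0,\infty)$ and $W_2$ is bounded near any point, the singular coefficient $W_1$ dictates that the admissible solution is the one comparable to $r^{\alpha+1}$, while the $r^{-\alpha}$ branch is excluded by finiteness of $\int |u'|^2$ (and by $u(0)=0$). Second, I would analyze the behavior at infinity: because $W_2$ decays exponentially, for large $r$ the equation is asymptotically $u'' = \frac{\alpha(\alpha+1)}{r^2}u$ again, whose solutions are a growing $r^{\alpha+1}$ mode and a decaying $r^{-\alpha}$ mode; the $H^1$ (indeed $L^2$) requirement forces $u$ to pick out the decaying mode $\sim r^{-\alpha}$. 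Third, with these two boundary behaviors fixed, I would run a Wronskian/energy identity argument: multiply the equation by $u$ (or by $u'$ as in the derivation of \eqref{eq.6rel}) and integrate over $(0,\infty)$. The boundary terms at $0$ and $\infty$ vanish thanks to the established decay rates, and the resulting identity should read something like
\[
\int_0^\infty |u'(r)|^2 \, dr + \int_0^\infty \frac{\alpha(\alpha+1)}{r^2}|u(r)|^2 \, dr = \int_0^\infty W_2(r)|u(r)|^2 \, dr.
\]

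The heart of the matter is then to show this identity cannot hold for a nonzero $u$. Here I would like to reuse the multiplier machinery of Section 3: the functions $g$ and $h$ and the resulting positivity/smallness estimates for $\Phi(r)$ from Lemmas \ref{lem:middlereg} and \ref{lem:largereg} were tailored precisely to dominate the $W_2$-contribution by a genuinely positive interior term. Since $W_2$ satisfies the same exponential bound \eqref{eq.h2b} as the potential in those lemmas, I expect the same choice of $g''$ in \eqref{eq.defgpp} to produce a $\Phi$ that is positive on the middle region and exponentially small on the far region; pairing this with the pointwise bound on $u$ derived from the regular near-origin behavior yields the contradiction $W_2(1)|u(1)|^2 \le C e^{-\varepsilon_0 M^2/2}$ exactly as in Theorem \ref{th:nores}. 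The main obstacle I anticipate is controlling the extra singular term $\frac{\alpha(\alpha+1)}{r^2}|u|^2$ near the origin: the multiplier identities in Section 3 were written for a smooth potential, so I would have to verify that the integrations by parts remain valid through $r=0$ once the $r^{-2}$ singularity is present, checking that the boundary contributions from the $r^{\alpha+1}$ behavior of $u$ genuinely vanish rather than merely stay finite. If those boundary terms behave, the positivity of the singular term actually helps (it adds a nonnegative quantity to the left side), so the inverse-square potential should strengthen rather than weaken the argument; the delicate bookkeeping is confined to justifying the limits at the endpoints.
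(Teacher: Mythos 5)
Your overall route is the paper's: feed $W = W_1 + W_2$ into the multiplier identity \eqref{eq.6rel}, get decay of the eigenfunction at infinity, and contradict $u(1)\neq 0$ as in Theorem \ref{th:nores}. But the one computation you defer is where the proof actually lives, and your guiding heuristic about it has the wrong sign. In \eqref{eq.6rel} the inverse-square part does not enter as the manifestly nonnegative quantity $\int \alpha(\alpha+1)r^{-2}|u|^2$ of your plain energy identity (which by itself proves nothing, since $W_2>0$ puts a positive term on the other side); it enters through $2gW_1 + GW_1'$, that is, through
\[
\Phi_1(r) \;=\; -\alpha(\alpha+1)\left(\frac{2g(r)}{r^2}-\frac{2G(r)}{r^3}\right),
\]
and since $G(r)=\int_0^r g(\tau)\,d\tau \le r\,g(r)$ this is $\le 0$: in the multiplier framework you propose to reuse, the singular term \emph{subtracts} from $\Phi$, it does not ``add a nonnegative quantity to the left side.'' What rescues the middle region is not a sign but a quantitative check, which the paper performs explicitly (Lemma \ref{lem:middleregnonr}): with $g''=r^M$ on $(0,M^2)$ one finds $\Phi_1(r) = -\tfrac{2r^M\alpha(\alpha+1)}{(M+3)(M+1)}$, smaller by a factor of order $M^{-2}$ than the dominant term $g''/2 = r^M/2$, so $\Phi \ge 2gW_2$ once $M$ is large relative to $\alpha$. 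Following your heuristic you would skip this verification, and the verification is the step.

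Conversely, the place you single out as delicate --- justifying the integrations by parts through $r=0$ --- is the harmless one: $g\sim r^{M+2}$ and $G\sim r^{M+3}$ vanish to high order at the origin, so the $r^{-2}$ singularity and the $r^{\alpha+1}$ branch of $u$ produce no boundary contributions. The genuinely dangerous region is $r>M^2$: writing $g(r)=Ar-B+O(e^{-\delta r})$ and $2G(r)=Ar^2-2Br+C+O(e^{-\delta r})$ with $A=\int_0^\infty g''(\tau)\,d\tau\approx M^{2M+2}/(M+1)$, one gets $\Phi_1(r)\approx -\alpha(\alpha+1)A/r$, which is nowhere near exponentially small; so your expectation that the choice \eqref{eq.defgpp} makes $\Phi$ exponentially small in the far region fails for $\alpha>0$, and the tail $\int_{M^2}^\infty \Phi_1|u|^2$ must be attacked using the decay of $u$ itself --- the paper's bounds \eqref{eq.o2} ($|u(R)|\le CR^{-1/2}$, $|u'(R)|\le CR^{-3/2}$, obtained by integrating the equation against the $r^{-2}$ behavior of $W$) and the refined asymptotics \eqref{eq.o2fine} coming from Lemma \ref{l.asex} --- ingredients your proposal mentions only qualitatively and never inserts into the estimate. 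Be warned that even the paper is terse here: its Lemma \ref{lem:largeregnonr} asserts the exponential bound for the full $\Phi$, which the expansion above shows cannot hold for the $\Phi_1$ piece when $\alpha>0$; so this step cannot be closed by citing ``the same lemmas'' from Section 3 --- it is exactly where an actual argument is required.
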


 \begin{proof} Suppose that there exists a real valued function $u(r) \in H^1(0,\infty)$ so that
$P(u)=0.$ Our goal will be to show that $u$ is identically zero.

The Sobolev embedding on $(0,\infty)$ implies that $u(r) \in C([0,\infty)).$ Then the equation $Pu=0$ guarantees that
$ u \in H^2 (R, \infty) \subset C^1( [R, \infty))$ for any $R >0.$ To analyze the behavior of the solution at infinity, we integrate the
equation $Pu=0$ in the interval $(R, R_1)$ and find
 \[\left|  u'(R) - u'(R_1)   \right| \leq \left(\int_R^{R_1} W(r)^2 \dif r \right)^{1/2} \|u\|_{L^2}
\leq C R^{-3/2}, \] since at infinity $W(r)$ behaves like $r^{-2}.$ The assumption $u \in H^1(0,\infty)$ easily yields
\begin{equation}\label{eq.o2}
   \left|  u'(R)   \right| \leq CR^{-3/2}, \qquad \left|  u(R)   \right| \leq CR^{-1/2}.
\end{equation}
From the asymptotic expansion obtained in the previous Lemma we have also
\begin{equation}\label{eq.o2fine}
   \left|  u'(R)  + \frac{\alpha}R   u(R)\right| \leq Ce^{-\varepsilon_0 R/2}.
\end{equation}

One can use the relation \eqref{eq.6rel} taking into account that
$$ \Phi (r) = \Phi_1(r) +  2g(r)W_2(r) + G(r) W_2^\prime(r) +  \frac{g^{\prime \prime}(r)}{2}  $$
with
$$\Phi_1(r) = -\alpha(\alpha+1) \left( \frac{2g(r)}{r^2} -  \frac{2G(r)}{r^3} \right).$$

Then one can proceed as in the proof of Theorem \ref{th:nores} modifying the assertions of Lemmas
as follows

\begin{lem} \label{lem:middleregnonr}
Suppose the potential $W_2(r)$ is a positive decreasing function, such that  the assumption \eqref{eq:asWmain} is satisfied.
 Then one can find a positive
constant $M^*$ depending on $ C^*, \varepsilon_0, $ such that for
any $r \in  (0, M^2)$ we have
$$ \Phi (r) = \Phi_1(r) +  2g(r)W_2(r) + G(r) W_2^\prime(r) +  \frac{g^{\prime \prime}(r)}{2} \geq 2 g W_2.$$
\end{lem}

\begin{lem} \label{lem:largeregnonr}
Suppose the potential $W_2(r)$ is a positive decreasing function, such that  the assumption \eqref{eq:asWmain} is satisfied.
Then one can find a positive constant $M^*$ depending on $ C^*, \varepsilon_0, $ such
that for any $M \geq M^*$  and for any $r \in  (M^2, \infty)$ we have
$$ \left| \Phi(r) \right|\leq 2 e^{-\varepsilon_0 r/2}.$$
\end{lem}

For the proof of the first Lemma it is sufficient to recall that dominant term
in $$ 2g(r)W_2(r) + G(r) W_2^\prime(r) +  \frac{g^{\prime \prime}(r)}{2} $$
for $0 \leq r \leq M^2$
is $$ \frac{g^{\prime \prime}(r)}{2} =  \frac{r^M}{2}. $$

Since
$$\Phi_1(r) = -\frac{2r^M \alpha(\alpha+1)}{(M+3)(M+1)} ,$$
we see that for $M$ large enough
$$  \Phi (r)  \geq 2 g W_2.$$

This completes the proof of the Lemma.
\end{proof}

Once we have proved lack of zero-energy eigenvalue, we shall prove the following:

\begin{lem} \label{l.twores}
Suppose $\alpha>1/2$. Then $\lambda=0$ is not a strong resonance of $P.$
\end{lem}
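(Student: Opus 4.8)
The plan is to reduce the statement about strong resonances to the already-established fact that $\lambda=0$ is not an eigenvalue. Concretely, I would show that under the hypothesis $\alpha>1/2$ every strong resonance $u$ in the sense of Definition \ref{defreslistrong} (so $Pu=0$, $u(0)=0$, $u\not\equiv 0$, and $|u(r)|\le C(1+r)^a$ with $0\le a<1$) necessarily lies in $H^1(0,\infty)$. Once this is proved, Theorem \ref{l.onee} forces $u\equiv 0$, contradicting non-triviality, and the lemma follows. Thus the entire task is to upgrade the sub-linear growth bound into genuine $L^2$-decay at infinity.

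The key step is an analysis at infinity through the first-order quantity $w(r)=u'(r)+\tfrac{\alpha}{r}u(r)$. Writing the equation as $u''=\tfrac{\alpha(\alpha+1)}{r^2}u-W_2u$ and differentiating $w$, a direct computation produces the linear ODE $w'-\tfrac{\alpha}{r}w=-W_2u$, whose integrating factor is $r^{-\alpha}$, so that $(r^{-\alpha}w)'=-r^{-\alpha}W_2u$. From the growth bound one gets $u''=O(r^{a-2})$, hence $u'(r)\to 0$ and in fact $|u'(r)|\le Cr^{a-1}$; therefore $|w(r)|\le Cr^{a-1}$ and $r^{-\alpha}w(r)\to 0$ as $r\to\infty$ (using $a-1-\alpha<0$). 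Integrating the exact relation from $r$ to $\infty$ then yields $w(r)=r^{\alpha}\int_r^\infty s^{-\alpha}W_2(s)u(s)\,ds$, and the exponential decay \eqref{eq.h2b} of $W_2$ gives precisely the estimate \eqref{eq.o2fine}, namely $|w(r)|\le Ce^{-\varepsilon_0 r/2}$, now valid for strong resonances rather than only for $H^1$-eigenfunctions.

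The decay of $w$ pins down the asymptotics of $u$. Setting $v(r)=r^{\alpha}u(r)$ one has $v'(r)=r^{\alpha}w(r)$, so $|v'(r)|\le Cr^{\alpha}e^{-\varepsilon_0 r/2}$ is integrable at infinity; consequently $v$ converges to a finite limit $v_\infty$ and $u(r)=v_\infty r^{-\alpha}+O\!\left(e^{-\varepsilon_0 r/2}\right)$. In other words, the growth bound has annihilated the growing indicial mode $r^{\alpha+1}$ and left only the decaying mode $r^{-\alpha}$. Here the hypothesis $\alpha>1/2$ enters decisively: $r^{-\alpha}\in L^2$ near infinity exactly when $2\alpha>1$, and together with the companion bound $|u'(r)|\le Cr^{-\alpha-1}$ this places both $u$ and $u'$ in $L^2(R,\infty)$. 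Near the origin, continuity of $u$ and the condition $u(0)=0$ select the regular branch $u\sim c\,r^{\alpha+1}$, which is harmless for $H^1(0,1)$, while on compact subintervals $u$ is a classical solution. Collecting these facts gives $u\in H^1(0,\infty)$, as required.

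I expect the main obstacle to be the second step, i.e.\ extracting the sharp asymptotic $u\sim v_\infty r^{-\alpha}$ from only the qualitative bound $|u|\le C(1+r)^a$. The ODE for $w$ and its integrating factor must be set up carefully, and one must verify that the boundary term $r^{-\alpha}w(r)$ genuinely vanishes at infinity before the representation $w(r)=r^{\alpha}\int_r^\infty s^{-\alpha}W_2u\,ds$ is legitimate. Once \eqref{eq.o2fine} is secured the rest is routine, the sole role of $\alpha>1/2$ being the integrability threshold $2\alpha>1$ that converts $r^{-\alpha}$-decay into the $H^1$-membership needed to invoke Theorem \ref{l.onee}.
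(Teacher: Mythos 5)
Your proposal is correct and takes essentially the same route as the paper: its one-line proof invokes the asymptotic expansion \eqref{eq.O7l} of Lemma \ref{l.asex} b) to place $u$ in $L^2(1,\infty)$ when $\alpha>1/2$, contradicting the absence of a zero eigenvalue (Theorem \ref{l.onee}), which is exactly your reduction. Your intermediate analysis via $w=u'+\tfrac{\alpha}{r}u$ and the integrating factor $r^{-\alpha}$ is simply a self-contained re-derivation of that lemma (it is the paper's identity \eqref{eq.O1as}), carried out in somewhat greater detail (e.g.\ the $H^1$-membership near the origin) than the paper itself provides.
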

\begin{proof}
According to the asymptotic formula \eqref{eq.O7l}, if the operator $P$ has in the spectral point $\lambda=0$ a resonance, then $u(r)$ is a function in $L^2(1, \infty),$ but this is clearly a contradiction. This easily concludes the proof of the lemma.
\end{proof}

Finally, we may study the resonances of the
operator
$$-\Delta - W(|x|), x \in \R^3.$$

\begin{defin}\label{defreslistrongR3}
A real number $\lambda$ is called a  strong  resonance of $ -\Delta - W(|x|) $ if there exists $u \in C(\R^3),$ such that  $u(x)$ is not identically zero, $-\Delta u - W(|x|)u=\lambda u$  in distribution sense in $\R^3$ and the solution $u$ satisfies the inequality
\begin{equation}\label{eq.1asustRn}
    |u(x)| \leq C \, (1+|x|)^{-\varepsilon} .
\end{equation}
with some $\varepsilon > 0.$
\end{defin}

\begin{theo} \label{th:noresR3}
Suppose the potential $W(r)$ is a positive decreasing function, such that there exist  positive constants
$C^*, \varepsilon$ so that \eqref{ass.res2} is fulfilled.
Then zero is not a strong resonance for $-\Delta -W(|x|).$
\end{theo}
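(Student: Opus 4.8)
The plan is to reduce the three-dimensional statement to the half-line results already established, by decomposing a hypothetical resonance into spherical harmonics. Suppose, for contradiction, that $\lambda = 0$ is a strong resonance of $-\Delta - W(|x|)$ in the sense of Definition \ref{defreslistrongR3}, so that there is a nontrivial $u \in C(\mathbf{R}^3)$ with $-\Delta u - W(|x|)u = 0$ in the distributional sense and $|u(x)| \leq C(1+|x|)^{-\varepsilon}$ for some $\varepsilon > 0$. Taking real and imaginary parts we may assume $u$ real-valued. Since $W = W(|x|)$ is radial, both $-\Delta$ and multiplication by $W$ commute with the decomposition into spherical harmonics, so I would expand
\[ u(r\omega) = \sum_{\ell \geq 0}\sum_{|m| \leq \ell} f_{\ell m}(r)\, Y_{\ell m}(\omega), \qquad f_{\ell m}(r) = \int_{\mathbf{S}^2} u(r\omega)\,\overline{Y_{\ell m}(\omega)}\, d\sigma(\omega), \]
and set $v_{\ell m}(r) = r\, f_{\ell m}(r)$. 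Projecting the equation $-\Delta u - Wu = 0$ onto $Y_{\ell m}$ shows that each $v_{\ell m}$ solves, distributionally on $(0,\infty)$, the reduced problem $P v_{\ell m} = 0$ associated with \eqref{eq.Pu}--\eqref{eq.w1} with $\alpha = \ell$ (and with the decaying part $W_2 = W$), that is $-v_{\ell m}'' + \frac{\ell(\ell+1)}{r^2} v_{\ell m} - W(r) v_{\ell m} = 0$.

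The next step is to transfer the admissibility conditions. From $|u(x)| \leq C(1+|x|)^{-\varepsilon}$ and the definition of $f_{\ell m}$ we get $|f_{\ell m}(r)| \leq C'(1+r)^{-\varepsilon}$, hence
\[ |v_{\ell m}(r)| = r\,|f_{\ell m}(r)| \leq C'\, (1+r)^{1-\varepsilon} = C'\,(1+r)^{a}, \qquad a := 1 - \varepsilon \in [0,1). \]
Moreover, continuity of $u$ at the origin forces $f_{\ell m}$ to be bounded near $r=0$ (and to vanish there for $\ell \geq 1$), so $v_{\ell m}(0) = 0$. Thus each nonzero $v_{\ell m}$ is exactly a strong resonance of the half-line operator $P$ with $\alpha = \ell$ in the sense of Definition \ref{defreslistrong}, the key point being that the exponent $a = 1 - \varepsilon$ is strictly less than $1$.

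I would then dispose of the components sector by sector. For $\ell = 0$ (i.e. $\alpha = 0$) Theorem \ref{th:nores} applies verbatim and forces $v_{0 m} \equiv 0$. For $\ell \geq 1$ we have $\alpha = \ell \geq 1 > 1/2$, so Lemma \ref{l.twores} applies and forces $v_{\ell m} \equiv 0$ (the underlying asymptotics \eqref{eq.O7l} making such a resonance square-integrable, hence an eigenvalue, which is excluded by Theorem \ref{l.onee}). Consequently every coefficient $f_{\ell m}$ vanishes and $u \equiv 0$, contradicting the nontriviality of the assumed resonance. This proves that zero is not a strong resonance of $-\Delta - W(|x|)$.

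The conceptual content being already packaged in Theorems \ref{th:nores}, \ref{l.onee} and Lemma \ref{l.twores}, the only real work is the bookkeeping of the reduction: one must check that the angular projection preserves the distributional equation componentwise, that the global pointwise decay $(1+|x|)^{-\varepsilon}$ descends to each coefficient with the same rate, and that the global continuity $u \in C(\mathbf{R}^3)$ yields the Dirichlet condition $v_{\ell m}(0)=0$. I expect the main (still routine) obstacle to be verifying that the exponent $a = 1-\varepsilon$ falls into the admissible range $[0,1)$ of Definition \ref{defreslistrong} uniformly in $\ell$, so that the half-line results can be invoked without alteration; everything else is standard separation of variables for a radial potential.
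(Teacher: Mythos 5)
Your proposal is correct and follows essentially the same route the paper intends: the paper leaves Theorem \ref{th:noresR3} without a written proof, but the opening of Section 4 announces exactly your reduction by spherical harmonics to the half-line operator \eqref{eq.Pu}, with Theorem \ref{th:nores} disposing of the $\ell=0$ sector and Lemma \ref{l.twores} (via the asymptotics \eqref{eq.O7l}, square-integrability, and the exclusion of zero eigenvalues in Theorem \ref{l.onee}) disposing of $\ell\geq 1$. Your bookkeeping — the decay bound $(1+r)^{1-\varepsilon}$ for $v_{\ell m}=r f_{\ell m}$ placing it in the admissible range $a\in[0,1)$ of Definition \ref{defreslistrong}, and the Dirichlet condition $v_{\ell m}(0)=0$ from continuity of $u$ — correctly supplies the details the paper omits.
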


\begin{remark}
Since $W$ is an exponentially decaying and real valued, the above result implies that $-\Delta -W(|x|)$ has no resonances
.
\end{remark}

\section{Resolvent estimates and local energy decay for wave equation with potential}
Along this section we will prove the main resolvent estimates concerning the perturbed operator \eqref{eq.elliptic1}. Let us indicate by
$$
R_0(\mu)=(-\Delta-\mu^2)^{-1},
$$
the resolvent of the operator $-\Delta,$ and set $R^{+}_0(\mu)=R_0(\mu)$ if $\Im\mu>0$ and respectively $R^{-}_0(\mu)=R_0(\mu)$ for $\Im\mu<0.$ Take into account now the initial value problem
\begin{equation}\label{eq.lwa1}
\begin{cases}
\partial^2_t u - \Delta u =0\ \ \  t \in \mathbf{R} , \ \ \ x \in \mathbf{R}^3\\
(u(0,x), \partial_tu(0,x))=(0,g(x)),
\end{cases}
\end{equation}
Once we pick $\Im\mu>C>0,$ we have, by an application of the Laplace transform,
\begin{equation}\label{eq.laplfree}
R_0(\mu)=\int_0^\infty e^{i\mu t}\mathcal{U}_0(t)dt
\end{equation}
where $\mathcal{U}_0(t)=\frac {\sin t\sqrt {-\Delta}}{\sqrt {-\Delta}}$ is the evolution operator associated to \eqref{eq.lwa1}. This means that the resolvent associated to \eqref{eq.lwa1} is a well-defined operator in $\cb(H^{m-1}, H^m),$ and depends analytically by $\mu,$ once one notice that (see \cite{KT}  and \cite{Sj2001} to have more details)
\begin{equation}\label{eq.freeboound}
\norm{\mathcal{U}_0(t)f}_{H^m}\lesssim \norm{f}_{ H^{m-1}}.
\end{equation}
for any Schwartz function $f.$ This aims to the following inequality, after an integration by parts of \eqref{eq.laplfree} and by \eqref{eq.freeboound}, to the bound,
\begin{equation}\label{eq.freest2}
\norm{R^{+}_0(\mu)f}_{H^m}\lesssim \frac{1}{\langle{\mu\rangle}}\norm{f}_{ H^{m}},
\end{equation}
for any Schwartz $f.$ Moreover one could get from the identity,
$$
R^{+}_0(\mu)=-\frac{1}{\mu^2}-\frac{1}{\mu^2}\Delta R^{+}_0(\mu),
$$
the estimate,
\begin{equation}\label{eq.freest3}
\norm{R^{+}_0(\mu)f}_{L^2}\lesssim \frac{1}{\langle{\mu\rangle}^2}\norm{f}_{ H^{1}}.
\end{equation}

Other relevant estimates obtained easily from the \eqref{eq.laplfree} are,
\begin{equation}\label{eq.freest4}
\norm{\frac{d}{d\mu}R^{+}_0(\mu)f}_{L^2}\lesssim \frac{1}{\langle{\mu\rangle}^2}\norm{f}_{ L^2},
\end{equation}
\begin{equation}\label{eq.freest5}
\norm{\frac{d^2}{d\mu^2}R^{+}_0(\mu)f}_{L^2}\lesssim \frac{1}{\langle{\mu\rangle}^3}\norm{f}_{ L^2},
\end{equation}
and
\begin{equation}\label{eq.freest6}
\norm{\frac{d^2}{d\mu^2}R^{+}_0(\mu)f}_{L^2}\lesssim \frac{1}{\langle{\mu\rangle}^2}\norm{f}_{H^{-1}}.
\end{equation}
Recall the classical resolvent identities \eqref{eq.pert1}, \eqref{eq.pert2}
and set
$$R(\mu)=  (-\Delta-W-\mu^2)^{-1} , \ \ A(\mu)=\varphi (-\Delta-\mu^2)^{-1} W \varphi^{-1}.$$
One have  the following compactness result in the spaces
$$ L^{2}_{-\delta} = \{ f \in L^2_{loc}, (1+|x|)^{-\delta} f \in L^2  \}.$$


\begin{lem}\label{comp1}
The operators $A(\mu)$ are compact in the space $B(L^{2}_{-\delta},L^{2}_{-\delta'}),$ for $\Im \mu>-\delta$ and $\delta, \delta'>0$. Moreover the following estimate is satisfied: $$\|A(\mu)\|_{B(L^{2}_{-\delta},L^{2}_{-\delta'})}\rightarrow 0,$$ as $\abs{\mu}\rightarrow \infty$.
\end{lem}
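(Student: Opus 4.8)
The plan is to prove the compactness of $A(\mu) = \varphi(-\Delta-\mu^2)^{-1}W\varphi^{-1}$ in $B(L^2_{-\delta}, L^2_{-\delta'})$ by factoring it through the free resolvent and exploiting the exponential decay of $W$ together with the weight $\varphi(r) = e^{-\delta r}$. First I would write $A(\mu) = \varphi R_0^+(\mu) W \varphi^{-1}$ and observe that the multiplication operator $g \mapsto W\varphi^{-1}g$ maps $L^2_{-\delta}$ into a space of exponentially decaying functions: indeed, since $W(r) \le Ce^{-\varepsilon_0 r}$ and $\varphi^{-1}(r) = e^{\delta r}$ with $0 < \delta < \varepsilon_0/2$, the product $W\varphi^{-1}$ still decays like $e^{-(\varepsilon_0-\delta)r}$, so it carries the polynomially-weighted space $L^2_{-\delta}$ into, say, $L^2_{s}$ for any $s>0$ (and in particular into $L^2$ with a strong weight). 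The key is that multiplication by a rapidly decaying $C^1$ function is a smoothing-and-decaying operation that upgrades the spatial localization.

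Next I would control the free resolvent. For $\Im\mu > -\delta$, the conjugated operator $\varphi R_0(\mu)\varphi^{-1}$ extends holomorphically, as already noted in the excerpt after equation~\eqref{eq.rep}: the exponential weight $\varphi = e^{-\delta r}$ compensates for the growth of the kernel $e^{i\mu s}\sin(\mu r)/\mu$ when $\Im\mu$ drops below zero. Combined with the free resolvent estimates \eqref{eq.freest2}--\eqref{eq.freest6}, which give mapping bounds between the Sobolev scales $H^m$ with an extra gain of $\langle\mu\rangle^{-1}$ or better, the operator $\varphi R_0^+(\mu)$ gains one degree of Sobolev regularity while preserving (or improving) spatial decay. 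Writing $A(\mu) = \bigl[\varphi R_0^+(\mu)\bigr]\circ\bigl[W\varphi^{-1}\bigr]$, I would then argue that $A(\mu)$ maps $L^2_{-\delta}$ boundedly into $H^1_{\text{loc}}$ with exponential spatial decay, i.e. into a space compactly embedded in $L^2_{-\delta'}$.

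The compactness then follows from a Rellich--Kondrachov type argument: the image of the unit ball of $L^2_{-\delta}$ under $A(\mu)$ consists of functions that are uniformly bounded in $H^1$ on every bounded set \emph{and} uniformly exponentially small outside large balls, so by the standard combination of local $H^1\hookrightarrow L^2$ compact embedding with a tail estimate, any bounded sequence in the image has an $L^2_{-\delta'}$-convergent subsequence. For the decay of the operator norm as $|\mu|\to\infty$, I would invoke the $\langle\mu\rangle^{-1}$ (and higher) gains in \eqref{eq.freest2}--\eqref{eq.freest3}: since the free resolvent norm between the relevant weighted Sobolev spaces tends to zero as $|\mu|\to\infty$ and the multiplication factor $W\varphi^{-1}$ is fixed, the composed operator norm $\|A(\mu)\|_{B(L^2_{-\delta},L^2_{-\delta'})}$ is dominated by $C/\langle\mu\rangle$ and hence vanishes.

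The main obstacle I anticipate is making the holomorphic extension of $\varphi R_0(\mu)\varphi^{-1}$ to the strip $\Im\mu > -\delta$ fully rigorous in the weighted spaces $L^2_{-\delta}$, rather than merely in $L^\infty$ or the exponentially-weighted $L^2$ used earlier. One must check carefully that the polynomial weights $(1+|x|)^{-\delta}$ interact correctly with the exponential weights $\varphi^{\pm 1}$ so that the resolvent estimates transfer from the Sobolev/exponential framework of \eqref{eq.freest2}--\eqref{eq.freest6} to the polynomially-weighted $L^2$ framework in the statement. A secondary technical point is the uniformity of the tail estimate in $\mu$ across the whole half-plane $\Im\mu>-\delta$, which is needed to run the Rellich compactness argument uniformly; I would handle this by separating the high-frequency regime (where the $\langle\mu\rangle^{-1}$ gain does the work) from a compact range of $\mu$ (where continuity and a fixed dominating decay suffice).
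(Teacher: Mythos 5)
Your proposal is correct and takes essentially the same route as the paper, which in fact skips the proof as ``well-known and standard'' while citing exactly the ingredients you develop: analyticity of $\varphi(-\Delta-\mu^2)^{-1}W$ in the strip $\Im \mu>-\delta$, the short-range (here exponentially decaying) character of $W$, boundedness of the relevant multiplication operators in the weighted spaces, and the free resolvent bound \eqref{eq.freest2} for the vanishing of the norm as $\abs{\mu}\rightarrow\infty$. Your Rellich--Kondrachov argument with exponential tail control simply makes explicit the Agmon-type compactness the paper invokes by reference to \cite{A}.
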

This Lemma is a well-known standard result  so we skip the proof. It suffices notice that $\varphi (-\Delta-\mu^2)^{-1} $W is analytic on the zone $\Im \mu>-\delta,$ and that the potential $W$ is of the short range type (see \cite{A} and references therein). The continuity of the multiplication operator $\varphi^{-1}$
in $B(L^{2}_{-\delta},L^{2}_{-\delta'}),$ with the estimate \eqref{eq.freest2}, give the result.

\begin{lem}\label{comp2}
Let us assume that the potential $W$ satisfies  \eqref{ass.res2}. The cutoff resolvent operator $\varphi R^{+}(\mu)\varphi$ has a meromorphic extension from $\Im \mu>0$ to $\Im \mu>-\delta.$ Moreover for each $\delta>0$ there exists a real constant $C>0$ such that the following estimates are true:
\begin{equation}\label{finf1}
  \|\varphi P_{ac} R^{+}(\mu)\varphi f\|_{L^2} \leq \frac{C}{{\langle{\mu\rangle}}}
  \|f\|_{L^{2}},
\end{equation}
\begin{equation}\label{finf2}
  \|\varphi P_{ac}R^{+}(\mu)\varphi f\|_{L^2} \leq \frac{C}{{\langle{\mu\rangle}^2}}
  \|f\|_{H^{1}},
\end{equation}
\begin{equation}\label{finf3}
  \|\varphi \frac{d}{d\mu}P_{ac} R^{+}(\mu)\varphi f\|_{L^2} \leq C\frac{C}{{\langle{\mu\rangle}^2}}
  \|f\|_{L^2},
\end{equation}
\begin{equation}\label{finf4}
  \|\varphi \frac{d^2}{d\mu^2}P_{ac} R^{+}(\mu)\varphi f\|_{L^2} \leq C\frac{C}{{\langle{\mu\rangle}^2}}
  \|f\|_{H^{-1}},
\end{equation}
for any Schwartz function $f.$
\end{lem}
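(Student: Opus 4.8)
The plan is to combine the factorization of the perturbed resolvent coming from \eqref{eq.pert1}--\eqref{eq.pert2} with the compactness from Lemma \ref{comp1}, the free-resolvent bounds \eqref{eq.freest2}--\eqref{eq.freest6}, and the two spectral inputs already established: the absence of a zero eigenvalue (Theorem \ref{l.onee}) and the absence of a strong resonance at the origin (Theorem \ref{th:noresR3}). The whole point is that these last two facts remove the only pole that could sit on or near the real axis, after which the estimates reduce to the free ones.

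First I would record the operator identity $\varphi R^{+}(\mu)\varphi=(I-A(\mu))^{-1}\varphi R^{+}_0(\mu)\varphi$, valid for $\Im\mu>0$, where $A(\mu)=\varphi(-\Delta-\mu^2)^{-1}W\varphi^{-1}$. Since $\varphi R^{+}_0(\mu)\varphi$ extends holomorphically to $\Im\mu>-\delta$ (as observed for the free resolvent) and, by Lemma \ref{comp1}, $A(\mu)$ is a holomorphic family of compact operators on $L^2_{-\delta}$ with $\|A(\mu)\|\to 0$ as $|\mu|\to\infty$, the analytic Fredholm theorem gives that $(I-A(\mu))^{-1}$, and hence $\varphi R^{+}(\mu)\varphi$, is meromorphic on $\Im\mu>-\delta$. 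Its poles occur exactly where $1$ is an eigenvalue of $A(\mu)$; by Lemma \ref{l;strres} these are precisely the resonances, while the genuine negative eigenvalues of $P$ (which produce poles of $R^{+}(\mu)$ on the imaginary axis) are removed once we insert $P_{ac}$, so that $\varphi P_{ac}R^{+}(\mu)\varphi$ has poles in $\Im\mu>-\delta$ only at resonances.

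The key step is to upgrade meromorphy to a uniform bound on $(I-A(\mu))^{-1}$ in a strip $\{\Im\mu\geq-\delta'\}$ for some $0<\delta'<\delta$. Here Theorem \ref{th:noresR3} and Theorem \ref{l.onee} are indispensable: since there is no resonance and no eigenvalue at $\mu=0$, the operator $I-A(0)$ is invertible, and by continuity $I-A(\mu)$ stays invertible in a full neighborhood of the origin; on the remaining part of the real axis there are no embedded eigenvalues, so no poles accrue there either. For $|\mu|$ large the bound $\|A(\mu)\|\leq 1/2$ from Lemma \ref{comp1} yields $\|(I-A(\mu))^{-1}\|\leq 2$, and a compactness argument on the bounded portion of the strip then furnishes a single constant $C=C(\delta)$ with $\|(I-A(\mu))^{-1}\|_{B(L^2_{-\delta},L^2_{-\delta})}\leq C$.

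With this uniform bound, the four estimates follow by feeding the free bounds through the factorization. For \eqref{finf1} and \eqref{finf2} I would use $\varphi P_{ac}R^{+}(\mu)\varphi=(I-A(\mu))^{-1}\varphi R^{+}_0(\mu)\varphi$ together with \eqref{eq.freest2} (taken with $m=0$) and \eqref{eq.freest3}, noting that multiplication by $\varphi=e^{-\delta r}$ is a bounded operator. For \eqref{finf3} and \eqref{finf4} I would differentiate the factorization, using $\frac{d}{d\mu}(I-A)^{-1}=(I-A)^{-1}A'(I-A)^{-1}$ and observing that $A'(\mu)=\varphi\frac{d}{d\mu}R^{+}_0(\mu)W\varphi^{-1}$ and $A''(\mu)$ are again uniformly bounded compact operators by the argument of Lemma \ref{comp1}; collecting the Leibniz terms and invoking \eqref{eq.freest4}, \eqref{eq.freest5} and \eqref{eq.freest6} supplies the claimed $\langle\mu\rangle^{-2}$ decay. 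The main obstacle is precisely the uniform invertibility of $I-A(\mu)$ up to and slightly below the real axis: without the no-resonance/no-eigenvalue input the factor $(I-A(\mu))^{-1}$ could develop a pole at $\mu=0$, which would destroy all four estimates; everything downstream is then a routine propagation of the free bounds.
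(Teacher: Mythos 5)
Your proposal follows essentially the same route as the paper's proof: the factorization \eqref{eq.pert2dd} plus the analytic Fredholm theorem and Lemma \ref{comp1} give the meromorphic extension, the spectral inputs (Theorems \ref{th:nores} and \ref{th:noresR3}, absence of eigenvalue/resonance at the origin, with $P_{ac}$ removing the negative-eigenvalue poles) yield analyticity and a uniform bound on $(I-A(\mu))^{-1}$ in the strip, and the free bounds \eqref{eq.freest2}--\eqref{eq.freest6} are then propagated through the factorization, with differentiation/integration by parts giving \eqref{finf3}--\eqref{finf4}. One small repair is needed: the identity you invoke in the final step, $\varphi P_{ac}R^{+}(\mu)\varphi=(I-A(\mu))^{-1}\varphi R^{+}_0(\mu)\varphi$, is not literally correct --- the right-hand side equals $\varphi R^{+}(\mu)\varphi$ and still carries the eigenvalue poles; the correct form has the extra factor $\varphi P_{ac}\varphi^{-1}$ in front of $(I-A(\mu))^{-1}$, and the paper devotes a step to proving this factor is bounded on $L^2$, writing $(I-P_{ac})f=\sum_{j=1}^N f_j\langle f,f_j\rangle_{L^2}$ and using the exponential decay of the finitely many eigenfunctions together with $\delta$ small. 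With that insertion your argument coincides with the paper's.
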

\begin{proof}
We start to prove the first claim. Denote by $P_{ac}$ the projection on the absolutely continuous part of the operator
$$ -\Delta - W.$$
The perturbed resolvent $R(\mu^2) = (P - \mu^2)^{-1}$ satisfies in $ \Im \mu > 0$ the relation
\begin{equation}\label{eq.pert1dd}
P_{ac}(-\Delta - W - \mu^2)^{-1} = P_{ac}\left(I - (\Delta-\mu^2)^{-1} W     \right)^{-1} (-\Delta-\mu^2)^{-1}
\end{equation}
provided the operator $ (I - (P_0-\mu^2)^{-1} W) $ is invertible. This relation implies
\begin{equation}\label{eq.pert2dd}
 \varphi P_{ac} (-\Delta - W - \mu^2)^{-1} \varphi  = \varphi P_{ac} \varphi^{-1}\left(I - \varphi (P_0-\mu^2)^{-1} W \frac{1}{\varphi}    \right)^{-1} \varphi (P_0-\mu^2)^{-1} \varphi .
 \end{equation}
 We can assume that $\mu_1<  \cdots, < \mu_N < 0$ are the eigenvalues of the operator $-\Delta - W$ with corresponding eigenvectors (normalized in $L^2$) eigenvectors $$f_1(x)<  \cdots, < f_N(x) $$ They decay exponentially and this fact implies
 $$ (I- P_{ac})f = \sum_{j=1}^N f_j \langle f, f_j \rangle_{L^2}. $$
 Hence taking $\delta >0$ sufficiently small in $\varphi(x)=e^{-\delta|x|}$ we get
 $$\| ( I- P_{ac}) \varphi^{-1} f \|_{L^2} \leq C \|  f \|_{L^2}.$$ Hence the operator $  \varphi P_{ac} \varphi^{-1}$ is bounded in $L^2$ and the  relation \eqref{eq.pert2dd} can be used in combination with
Analytic Fredholm Theory and the Theorem \ref{th:nores} concerning the lack of strong resonances in $\mu=0,$
we are able to say that the operator
$$
\left(I - \varphi (-\Delta-\mu^2)^{-1} W \frac{1}{\varphi}    \right)^{-1},
$$
is analytic in $\Im \mu>-\delta,$ excluded a discrete subset where there are the eigenvalues of \eqref{eq.elliptic1}.
Moreover, the application of Theorem \ref{th:noresR3} and the remark after the Theorem gurantees that
$$ \varphi P_{ac} \varphi^{-1} \left(I - \varphi (-\Delta-\mu^2)^{-1} W \frac{1}{\varphi}    \right)^{-1} $$
is analytic in $\Im \mu>-\delta.$

In this way we obtain the inequality
$$
 \norm{\varphi P_{ac}R^{+}(\mu) \varphi f}_{L^{2}}  \lesssim $$

$$ \lesssim \norm{\varphi P_{ac} \varphi^{-1}\left(I - \varphi (-\Delta-\mu^2)^{-1} W \frac{1}{\varphi}    \right)^{-1}}_{B(L^{2}_{-\delta},L^{2}_{-\delta'})}  \norm{\varphi R_0^{+}(\mu) \varphi f }_{L^{2}}.$$
Considering $\varphi \in L^1(\rone),$ the right hand side of the previous estimate could be bounded in several different way: by \eqref{eq.freest2}, it does not exceed
$C\langle{\mu\rangle}^{-1}
  \| f\|_{L^{2}},$ while, from \eqref{eq.freest3}, we get that it is less than $C\langle{\mu\rangle}^{-2}
  \|f\|_{L^{2}}.$ In that way the resolvent estimates \eqref{finf1} and  \eqref{finf2} are obtained.
  After integrations by parts and following the same lines of the proof for the above estimates, by using \eqref{eq.freest4}, \eqref{eq.freest5} and \eqref{eq.freest6} we finally get \eqref{finf3} and \eqref{finf4}.
We notice that, the meromorphic extension of the cutoff resolvent $\varphi (P-\mu^2)^{-1} \varphi,$
guarantees that  the estimates for $\Im \mu>0,$ remain valid also in the domain $\Im \mu>-\delta.$
\end{proof}
\begin{remark}\label{eq.reson} Since the operator $-\Delta -W(|x|)$ has no resonances, the operator $R^{+}(\mu)$ has the form,
\begin{equation}\label{eq.projres}
R^{+}(\mu) =\sum_{j>0}\frac{B_j}{(\mu-\mu_j)}+Q(\mu)
\end{equation}
where the $B_j$ are projection operators on the eigenspaces associated to the eigenvalues
$\mu=\mu_j,$ while $Q(\mu)$ is analytic in $\Im \mu>-\delta.$. The operator
$$ P_{ac}R^{+}(\mu) = P_{ac}Q(\mu),$$
 is also analytic in $\Im \mu>-\delta.$
\end{remark}

We now finally give the

\begin{theo}\label{th.localenergy}
Let  $u_0$ and $v_0$ are Schwartz functions and $\varphi(x)=e^{-\delta|x|},$ for some $\delta>0.$ Then there exists $a>0,$ such that the solution to \eqref{NLW}, assuming that the potential $W$ has the properties  \eqref{ass.res2}, satisfies the following,
\begin{equation}\label{eq.localen1}
\norm{\varphi P_{ac}u(t,x)}_{L^2}\lesssim e^{-at}(\norm{u_0}_{L^2}+\norm{v_0}_{H^{-1}}),
\end{equation}
and
\begin{equation}\label{eq.localen2}
\norm{\varphi \partial_t P_{ac}u(t,x)}_{L^2}\lesssim e^{-at}(\norm{u_0}_{H^1}+\norm{v_0}_{L^2}).
\end{equation}
\end{theo}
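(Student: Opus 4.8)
The plan is to represent the $P_{ac}$-part of the solution through the cut-off resolvent and then to push the inversion contour below the real axis, where the factor $e^{-i\mu t}$ decays exponentially. Writing $H=-\Delta-W$, the solution of \eqref{NLW} is $u(t)=\cos(t\sqrt H)\,u_0+\frac{\sin t\sqrt H}{\sqrt H}\,v_0$, whence $\partial_t u(t)=-\sqrt H\,\sin(t\sqrt H)\,u_0+\cos(t\sqrt H)\,v_0$. The Laplace transform identity \eqref{eq.laplfree}, now for the perturbed operator, reads $R^{+}(\mu)=\int_0^\infty e^{i\mu t}\,\frac{\sin t\sqrt H}{\sqrt H}\,dt$ for $\Im\mu>0$; differentiating in $t$ and applying $\sqrt H$ furnishes the transforms of the remaining propagators, namely $-i\mu R^{+}(\mu)$ for $\cos(t\sqrt H)$ and $I+\mu^2R^{+}(\mu)$ for $\sqrt H\,\sin(t\sqrt H)$. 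Fourier--Laplace inversion then expresses each quantity $\varphi P_{ac}[\,\cdot\,]$ as a contour integral over the line $\Im\mu=c>0$ of $e^{-i\mu t}$ times the corresponding $\mu$-weighted factor of $\varphi P_{ac}R^{+}(\mu)$, the identity term $I$ contributing nothing for $t>0$.

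First I would deform the contour from $\Im\mu=c$ down to $\Im\mu=-a$ with $0<a<\delta$. This is precisely where the absence of resonance enters: by Remark \ref{eq.reson} (which rests on Theorem \ref{th:nores} and Theorem \ref{th:noresR3}) the map $\mu\mapsto\varphi P_{ac}R^{+}(\mu)\varphi$ is holomorphic on the whole strip $-\delta<\Im\mu\le c$, the poles at the eigenvalues having been annihilated by $P_{ac}$. The two connecting vertical segments at $\Re\mu=\pm R$ are discarded as $R\to\infty$ by means of the decay-in-$\mu$ bounds of Lemma \ref{comp2}. On the shifted line one has $|e^{-i\mu t}|=e^{(\Im\mu)t}=e^{-at}$, which produces exactly the claimed exponential rate, uniformly in the spatial variables because of the fixed weight $\varphi$.

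It then remains to make each contour integral absolutely convergent and to extract the correct Sobolev norm of the data. Here I would integrate by parts in $\mu$ (the boundary terms vanish by the same decay), trading the polynomial weights $\mu$ and $\mu^2$ against $\mu$-derivatives of $\varphi P_{ac}R^{+}(\mu)\varphi$ and invoking \eqref{finf1}--\eqref{finf4}, equivalently inserting the free bounds \eqref{eq.freest2}--\eqref{eq.freest6} into the factorisation \eqref{eq.pert2dd}. Concretely, the $\frac{\sin t\sqrt H}{\sqrt H}\,v_0$ term is controlled, after two integrations by parts, by \eqref{finf4}, yielding $\norm{v_0}_{H^{-1}}$; the two $\cos(t\sqrt H)$ terms, each carrying one power of $\mu$, are controlled through \eqref{finf3} (together with the companion second-derivative bound inherited from \eqref{eq.freest5}), yielding $\norm{u_0}_{L^2}$ and $\norm{v_0}_{L^2}$; and the $\sqrt H\,\sin(t\sqrt H)\,u_0$ term, carrying $\mu^2$, is controlled by \eqref{finf2}, yielding $\norm{u_0}_{H^1}$. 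This gives \eqref{eq.localen1} and \eqref{eq.localen2} for $t\ge 1$, the negative powers of $t$ generated by the integrations by parts being harmless there, since they are absorbed into $e^{-at}$ after an arbitrarily small decrease of $a$.

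The main obstacle I expect lies entirely in this last step: justifying the contour deformation rigorously (holomorphy across the closed strip and the vanishing of the far-field segments) and, above all, the precise bookkeeping that matches each polynomial weight $\mu^k$ against enough $\mu$-derivatives of the resolvent so that every integrand is genuinely $O(\langle\mu\rangle^{-1-\varepsilon})$ on $\Im\mu=-a$ while reproducing exactly the data norms on the right-hand sides. Finally, the range $0\le t\le 1$ is treated directly by the spectral theorem: on the range of $P_{ac}$ one has $H\ge0$, so $\cos(t\sqrt H)$, $\frac{\sin t\sqrt H}{\sqrt H}$ and $\sqrt H\,\sin(t\sqrt H)$ are bounded from $L^2$, $H^{-1}$ and $H^1$ respectively (using the equivalence of the $H$- and $(-\Delta)$-Sobolev norms on $P_{ac}L^2$, itself a consequence of the absence of a zero-energy resonance), and since $e^{-at}$ stays bounded below by a positive constant there, the two estimates hold on $[0,1]$ as well. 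Combining the two regimes yields \eqref{eq.localen1} and \eqref{eq.localen2} for all $t\ge0$.
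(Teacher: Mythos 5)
Your proposal is correct and follows essentially the same route as the paper's proof: Fourier--Laplace inversion of the propagator, deformation of the contour to the line $\Im\mu=-a$ justified by the resonance-free strip (Remark \ref{eq.reson}, resting on Theorems \ref{th:nores} and \ref{th:noresR3}), removal of the vertical segments via the decay estimates of Lemma \ref{comp2}, and two integrations by parts on the shifted line against \eqref{finf1}--\eqref{finf4} to produce the factor $e^{-at}$ together with the stated data norms. The only deviations are implementation details within the same method: the paper reduces to $u_0=0$ and deduces \eqref{eq.localen2} by applying \eqref{eq.localen1} to $\partial_t u$ instead of treating the $\mu$-weighted transforms $-i\mu R^{+}(\mu)$ and $I+\mu^{2}R^{+}(\mu)$ directly, and it normalizes the integrand by the pole subtraction \eqref{eq.resnew} (using \eqref{eq.termzero}) where you instead track vanishing boundary terms, while your explicit spectral-theorem treatment of $0\le t\le 1$ is left implicit in the paper.
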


\begin{proof}
The proof of the main theorem follows the one of Vainberg in \cite{Vai89} and \cite {Sj2001}.
\emph{Proof of \eqref{eq.localen1}.} We have no loss of generality if we assume $u(0,x)=0$ in \eqref{NLW} and $u,v_0$ are real -- valued functions. Let be $\mathcal{H}$ a general Hilbert space and denote by $L_{\nu}(\rone, \ch)=e^{-\nu r}L(\rone, \ch).$ We have, by \eqref{eq.laplfree}, the inversion formula
\begin{equation}\label{eq.invfor}
u(t,x)=\frac{1}{2\pi}\Re \int_{-\infty+i\nu}^{\infty+i\nu} e^{-i\mu t}R^+(\mu)v_0d\mu,
\end{equation}
and the above integral converges in $L_{\nu}(\rone, L^2).$ We indicate by $\overline{R}^+(\mu)$ the meromorphic extension of the cutoff resolvent $\varphi R^+(\mu)\varphi.$
We indicate by
\begin{equation}\label{eq.resnew}
\widetilde{R}^{+}(\mu) =\overline{R}^+(\mu)-\frac{\varphi^2}{\mu-i(\nu-1)}.
\end{equation}
From the fact that
\begin{equation}\label{eq.termzero}
\Re \int_{-\infty+i\nu}^{\infty+i\nu} \frac{e^{-i\mu t}}{\mu-i(\nu-1)}d\mu=0,
\end{equation}
we can rewrite the \eqref{eq.invfor} as
\begin{equation}\label{eq.invfor2}
\varphi u(t,x)=\frac{1}{2\pi} \Re \int_{-\infty+i\nu}^{\infty+i\nu} e^{-i\mu t}\widetilde{R}^{+}(\mu)\varphi ^{-1}v_0d\mu.
\end{equation}

\begin{figure}[htb!]
\centering%
\includegraphics[scale=0.5]{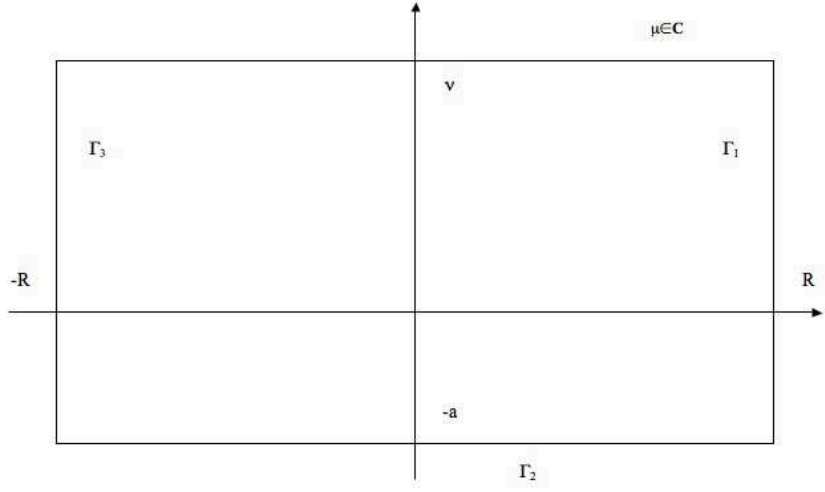}
\caption{The path $\Gamma\cup\Gamma_1\cup\Gamma_2\cup\Gamma_3.$}
\label{fig:FigureExample}
\end{figure}

By Cauchy theorem and integrating along the path showed in the Figure 1 we yield:
\begin{equation}\label{eq.invfor3}
\frac{1}{2\pi}\int_{-\infty+i\nu}^{\infty+i\nu} e^{-i\mu t}\widetilde{R}^{+}(\mu)\varphi ^{-1}v_0d\mu=\sum_{j=1}^{3}\frac{1}{2\pi}\int_{\Gamma_j} e^{-i\mu t}\widetilde{R}^{+}(\mu)\varphi ^{-1}v_0d\mu=\sum_{j=1}^{3}\Lambda_j.
\end{equation}
First we notice that from inequality \eqref{finf2} we achieve
\begin{equation}\label{eq.cont2}
\norm{\Lambda_1}_{L^2}\lesssim\int_{R-ia}^{R+i\nu} \norm{e^{-is t}\widetilde{R}^{+}(s)\varphi ^{-1}v_0}_{L^2}ds\lesssim c(t,\nu)R^{-2}\norm{\varphi ^{-1}v_0}_{H^1}.
\end{equation}
where $c(t,\nu)$ is a positive measurable function depending (exponentially) on $t$ and $\nu.$ A similar estimate is valid also for the term $\Gamma_3.$ Taking $R\rightarrow\infty,$ we easily see that $\norm{\Lambda_l}_{L^2}$ approaches to 0 for $l=1,3.$
As far as concerning the remaining integral $\Lambda_2,$ we obtain
by two integrations by parts,
\begin{equation}\label{eq.invfor4}
\Lambda_2=N(R)\varphi ^{-1}v_0+\frac{1}{2t^2\pi}\int_{-R-ia}^{R-ia} e^{-i\mu t}\frac{d^2}{ds^2}\widetilde{R}^{+}(\mu)\varphi ^{-1}v_0d\mu,
\end{equation}
where we denote by $N(R)$ the expression that includes all the boundary integral terms. If $\overline{\Lambda}_2$ denotes the second term on the right hand side of the previous identity, we may write
\begin{equation}\label{eq.cont3}
\begin{aligned}
\norm{\Lambda_2}_{L^2}\lesssim& \norm{N(R)\varphi ^{-1}v_0}_{L^2}+
\norm{\overline{\Lambda}_2}_{L^2}.
\end{aligned}
\end{equation}
By estimate \eqref{finf1} and \eqref{finf2}, it is easy to se that
the first term on the right hand side of the above identity can be
bounded by $C(R) \norm{\varphi ^{-1}v_0}_{H^{1}},$ where $C(R)$ is
a constant depending on $R$ approaching to 0 as $R\rightarrow0.$
The remaining term can be handled, for $t$ large enough, as
\begin{equation}\label{eq.cont6}
\begin{aligned}
\norm{\overline{\Lambda}_2}_{L^2}\lesssim&
\int_{-R-ia}^{R-ia} e^{-at}\norm{\widetilde{R}^{+}(s)\varphi ^{-1}v_0}_{L^2}ds\lesssim e^{-at}\norm{\varphi ^{-1}v_0}_{H^{-1}}\int_{-R}^{R} \langle{s\rangle}^{-2}ds\lesssim\\
&\lesssim e^{-at}\norm{\varphi ^{-1}v_0}_{H^{-1}},
\end{aligned}
\end{equation}
and this completes the proof of the first part of the theorem.\\
\emph{Proof of \eqref{eq.localen2}.} It is enough to see that the function $\partial_tu(t,x),$ satisfies
the equation \eqref{NLW}, with initial data $v_0$ and $\Delta v_0.$ Now we use the estimate \eqref{eq.localen1}.
\end{proof}

\section{The Nonlinear Schr\"odinger equation}
Consider the   nonlinear Schr\"odinger equation (NLS),
\begin{equation}\label{NLS}
\begin{aligned}
& iu_{t }(t,x)=-\Delta u(t,x) - |u|^{p-1} u(t,x)=0 ,\quad (t,x) \in \mathbf{R}\times\mathbf{R}^3,
\\&
  u(0,x)=u_0(x),
 \end{aligned}
\end{equation}
where $1<p<1+4/3,$ this means in the domain where the problem is globally well-posed. In fact for $p>1+4/3,$ can be exist solutions with $H^1$ norms blowing up in a finite time interval.
Solitary waves associated with the NLS type equation have the form
$$
\psi_s (t,x) = \chi_{\omega}(x) e^{-i \omega t},\quad t\in \mathbf{R}, x\in \mathbf{R}^3,
,$$
with $\omega\subset \mathcal{O},$ for some open interval $\mathcal{O}\in \mathbf{R},$ where $\chi_{\omega} $  satisfies the equation
\begin{eqnarray}\label{eq.M-S1}
 -\Delta \chi_{\omega} -\omega \chi_{\omega} +|\chi_{\omega}|^{p-1} \chi_{\omega}=0     ,\quad x \in \mathbf{R}^3, \\
\int_{\mathbf{R}^3} \chi^2 =1.\label{eq.M-S3}
\end{eqnarray}
We recall some well known facts about the linearization at a ground
state. Let us write the ansatz
\begin{equation} \label{eq:decomp1}
 u(t,x) = e^{i t}
(\chi _{\omega} (x)+
r(t,x)),
\end{equation}
Inserting it into the equation \eqref{eq:decomp1} we get
\begin{equation}\label{eq:linearized}
\begin{aligned} &
  i \partial_{t}r =
 \mathcal{A} r +(\text{nonlinear terms}),
 \end{aligned}
\end{equation}
and
$$
\mathcal{A}(\omega)r=(-\Delta +\omega-\tfrac{p+1}{2}\chi_\omega^p)r-\tfrac{p-1}{2}\chi_\omega^p\overline{r}.
$$

Because of the presence of the variable $\overline{r}$, we write the above as a system. This yields to
\begin{equation}\label{eq:Linearmat}
\begin{aligned} &
  i \partial_{t}R =
 H(\omega) R
 \end{aligned}
\end{equation}
where
\begin{equation} \label{eq:operator}
\begin{aligned}
&H(\omega)=\begin{pmatrix}0 &
L_+(\omega)  \\
-L_-(\omega)& 0,
 \end{pmatrix} \end{aligned}
\end{equation}
and with
\begin{equation} \label{eq:operator2}
\begin{aligned}
L_+(\omega)=-\Delta +\omega-p\chi_\omega^p, \qquad L_-(\omega)=-\Delta +\omega-\chi_\omega^p,
\end{aligned}
\end{equation}
having in mind also that the essential spectrum of $H(1)$ consists of
$(-\infty , -\omega]\cup [ \omega,+\infty ),$ and that
$0$ is its isolated eigenvalue. Furthermore
 it is easy to
see that $L(\omega)_{\pm}$ are self-adjoint operator with continuous spectrum in $[\omega, \infty)$, that
$L(\omega)_{-}$ is nonnegative, while $L(\omega)_{+}$ has exactly one negative eigenvalue (see the paper \cite{CGNT2007} and \cite{We85} for more details).
We get, according also to the results in \cite{DS2006}, the following:

\begin{theo}\label{Th:NLSres}
The operators $L_\pm(\omega)$ have neither an eigenvalue nor a strong resonance at spectral point $\omega.$ Moreover the linearized operator $H(\omega)$ has no strong resonances at the spectral points $\pm\omega.$
\end{theo}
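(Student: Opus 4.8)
The plan is to reduce both assertions to the scalar threshold results already proved, namely Theorem \ref{l.onee} (zero is not an eigenvalue) and Theorem \ref{th:noresR3} (zero is not a strong resonance). First I would observe that $\omega$ is the bottom of the continuous spectrum $[\omega,\infty)$ of $L_\pm(\omega)$, and that subtracting $\omega$ turns these into Schr\"odinger operators with threshold at the origin,
\[
 L_+(\omega)-\omega=-\Delta-V_+,\qquad L_-(\omega)-\omega=-\Delta-V_-,
\]
where, by \eqref{eq:operator2}, $V_+$ and $V_-$ are positive constant multiples of a positive power of the ground state $\chi_\omega$. Hence $\omega$ is an eigenvalue (resp. a strong resonance) of $L_\pm(\omega)$ exactly when the origin is one for $-\Delta-V_\pm$, and it is enough to check that $V_\pm$ satisfies the hypotheses of the scalar theorems.

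Next I would verify those hypotheses. By Lemma \ref{l.parLions} and Remark \ref{r.lio} one has $\chi_\omega(x)=\omega^{1/(p-1)}\chi_1(\sqrt{\omega}\,x)$ with $\chi_1$ the unique positive radial solution of $-\Delta\chi_1+\chi_1=\chi_1^p$. Consequently each $V_\pm$ is positive, radial, and, by the standard (Gidas--Ni--Nirenberg) symmetry of ground states, radially decreasing; elliptic regularity gives $V_\pm\in C^1$; and the classical exponential decay of $\chi_1$ and of $\chi_1'$ yields $|V_\pm(r)|+|V_\pm'(r)|\le C e^{-\varepsilon_0 r}$ with $\varepsilon_0$ a positive multiple of $\sqrt{\omega}$, i.e. \eqref{ass.res2} holds. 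I would then invoke Theorem \ref{l.onee}, through the spherical-harmonic reduction of Section~4, to rule out a zero eigenvalue, and Theorem \ref{th:noresR3} to rule out a zero strong resonance of $-\Delta-V_\pm$; undoing the shift by $\omega$ proves the first sentence of the statement. It is worth recording that $\chi_\omega$, which spans $\ker L_-(\omega)$, sits at the spectral value $0<\omega$, strictly below the threshold, so it does not interfere with the threshold analysis.

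For the matrix operator $H(\omega)$ I would argue by contradiction, following the matrix/scalar correspondence of \cite{CGNT2007} and \cite{DS2006}. A strong resonance at $\pm\omega$ is a non-$L^2$ solution $R=(R_1,R_2)$ of $(H(\omega)\mp\omega)R=0$ with $|R(x)|\lesssim(1+|x|)^{-\varepsilon}$. The block structure \eqref{eq:operator} gives the coupled pair $L_+(\omega)R_2=\pm\omega R_1$ and $L_-(\omega)R_1=\pm\omega R_2$, and eliminating one component yields a decoupled fourth-order equation of the form $L_-(\omega)L_+(\omega)R_2=\omega^2R_2$ whose free symbol, consistent with the essential spectrum $(-\infty,-\omega]\cup[\omega,\infty)$, degenerates exactly at the threshold, and symmetrically for $R_1$. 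Since $V_\pm$ decays exponentially, at infinity $L_\pm(\omega)\to-\Delta+\omega$ and the characteristic set of this equation is $\{|\xi|^2=0\}\cup\{|\xi|^2=-2\omega\}$; the second branch is exponentially damped, so a slowly decaying $R$ must, to leading order, live in the open channel $|\xi|=0$. Feeding this into the asymptotic expansion of Lemma \ref{lem:2}, exactly as in the proof of Theorem \ref{th:nores}, forces $R_1$ or $R_2$ to be a threshold resonance or eigenvalue of one of the scalar operators $L_\pm(\omega)$ at $\omega$, which the first part has just excluded; this is the desired contradiction.

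I expect this last step to be the main obstacle. Because $L_+(\omega)$ and $L_-(\omega)$ do not commute, the elimination produces a genuinely fourth-order problem, and the delicate point is to justify rigorously that a threshold resonance of $H(\omega)$ projects onto a threshold resonance or eigenvalue of a single scalar operator instead of being sustained by the off-diagonal coupling. This decoupling at the edges of the continuous spectrum is precisely what the reduction of \cite{DS2006} supplies, and combined with the scalar conclusions above it closes the argument.
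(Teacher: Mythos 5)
Your proposal takes essentially the same route as the paper: the paper likewise shifts the spectral parameter (rescaling to $\omega=1$), identifies $L_\pm(\omega)-\omega$ as $-\Delta-W$ with $W$ a positive, radial, exponentially decaying $C^1$ potential (the decay being exactly what Lemmas \ref{l.AS1}--\ref{l.AS3} and Theorem \ref{th.AS3} establish, matching your appeal to classical ground-state decay), and then applies Theorem \ref{th:noresR3} (together with the eigenvalue exclusion of Theorem \ref{l.onee}) to rule out a threshold eigenvalue or strong resonance. For the matrix operator the paper simply invokes Lemma 16 of \cite{Sch09} to transfer the scalar conclusions to $H(\omega)$ at $\pm\omega$; your fourth-order elimination sketch is heuristic and would not stand alone (as you note, the non-commutativity of $L_\pm(\omega)$ is the obstruction), but since you ultimately defer to the same known matrix-to-scalar threshold reduction --- which is located in \cite{Sch09}, not in \cite{DS2006} --- your argument coincides in substance with the paper's.
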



Before to start the proof of the above theorem we need to give some preliminary lemmas.
By a rescaling argument we can pick $\omega=1,$ and focalize our attention on the operator
$$L_-= - \Delta \chi_1 +  \chi_1 - \chi_1^p , \ \chi_1 \in H^2(R^3), \ \chi_1>0 $$ because all results can be proved in the same manner for $L_{+}.$
It is well - known that positive radial solutions exist and they are exponentially decaying (see, for istance, \cite{GNN81}).
Here we briefly sketch the proof for completeness and make a better asymptotic expansion.
First we note that the Sobolev embedding implies
\begin{equation}\label{eq.sob11}
    |\chi_1(x)| \leq C.
\end{equation}

A better decay estimate follows from an argument of Strauss (see page 155, section 2 of
\cite{St77}). The classical Strauss lemma (Radial Lemma 1 in \cite{St77}) gives
$$ r \chi_1(r) \leq \|\chi_1\|_{H^1}= C,$$
so $$ \chi_1(r) - \chi_1^p(r) > (1-\delta) \chi_1(r)$$
for any positive $\delta$ and for $r>0$ large enough.
Setting
$u=\chi_1(r) r,$ we see that $ u^{\prime \prime}(r) \geq (1-\delta) u(r)$ so
$$ \left( \frac{u^2}{2} \right)^{\prime \prime} \geq \left( u^\prime \right)^2 + (1-\delta) u^2 \geq (1-\delta) u^2.$$
This differential inequality shows that the quantity
$$ e^{-\sqrt{2(1-\delta)}r} (w^\prime + \sqrt{2(1-\delta)} w), \ \ w(r) = u^2(r) $$
in a non-decreasing  and has to be non -- negative, since $w$ and $w^\prime$ are integrable on $(0,\infty).$
This implies
 the decay estimate
$$ u^2(r) \leq C e^{-\sqrt{2(1-\delta)}r} , \ \ \ r \rightarrow \infty.$$
Using the argument of Remark \ref{r.lio}, we arrive at the following.
\begin{lem} \label{l.AS1}
For any $\omega >0 $  there exists a
unique positive solution $\chi(x) = \chi_\omega (x)  \in H^1$ of
the equation \eqref{eq.M-S1} and a positive $\delta_0=\delta_0(\omega)$,  such that
$$ |\chi_\omega(r)| \leq C e^{-\delta_0 r}.$$
\end{lem}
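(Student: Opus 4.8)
The existence and uniqueness of the positive radial solution $\chi_\omega \in H^1$ is already furnished by Lemma \ref{l.parLions}, so the only genuinely new content is the exponential bound $|\chi_\omega(r)| \le C e^{-\delta_0 r}$. The plan is to prove the estimate first for the normalized solution $\chi_1$ (the case $\omega = 1$) by an elementary ODE comparison argument, and then to transfer it to arbitrary $\omega$ by the scaling identity recorded in Remark \ref{r.lio}.

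For $\chi_1$ I would start from the radial form of $-\Delta\chi_1 + \chi_1 = \chi_1^p$ and pass to $u(r) = r\,\chi_1(r)$, which satisfies $u'' = u - u^{p}/r^{p-1} = \bigl(1 - \chi_1^{p-1}\bigr)u$. The preliminary bounds already in hand, namely $|\chi_1| \le C$ from Sobolev embedding and $r\chi_1(r) \le \|\chi_1\|_{H^1}$ from the Strauss lemma of \cite{St77}, guarantee $\chi_1(r) \to 0$, so that for every $\delta \in (0,1)$ the nonlinear contribution is subdominant for $r$ large and $u'' \ge (1-\delta)u$. Passing to $w = u^2$ gives $w'' \ge 2(1-\delta)w$; equivalently, writing $\beta = \sqrt{2(1-\delta)}$, the quantity $e^{-\beta r}\bigl(w' + \beta w\bigr)$ has non-negative derivative and is therefore monotone. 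Since $w$ and $w'$ are integrable on $(0,\infty)$, this monotone quantity must tend to $0$ at infinity, which forces its sign and hence $w' + \beta w \le 0$; integrating yields $u^2(r) \le C e^{-\beta r}$ and thus $\chi_1(r) = u(r)/r \le C e^{-\delta_1 r}$ for some $\delta_1 > 0$.

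It then remains to invoke Remark \ref{r.lio}, where the scaling $\chi_\omega(x) = \omega^{1/(p-1)}\chi_1(\sqrt{\omega}\,x)$ is recorded. Substituting $r \mapsto \sqrt{\omega}\,r$ into the bound for $\chi_1$ gives $|\chi_\omega(r)| \le C\,\omega^{1/(p-1)} e^{-\delta_1\sqrt{\omega}\,r}$, so the conclusion holds with $\delta_0 = \delta_1\sqrt{\omega}$ and a constant enlarged by the factor $\omega^{1/(p-1)}$.

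I expect the only delicate point to be the justification of the sign and the vanishing at infinity of the first-order quantity $e^{-\beta r}(w' + \beta w)$: one must use that $u = r\chi_1$ and $u'$ tend to zero (a consequence of $\chi_1 \in H^1$ together with the Strauss bound) in order to conclude that the monotone quantity has limit $0$ and hence the sign that forces $e^{\beta r}w$ to be non-increasing. Everything else — the reduction of $-\Delta$ to the radial ODE, the subdominance of the nonlinearity once $\chi_1 \to 0$, and the final scaling — is routine.
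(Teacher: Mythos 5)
Your proof is correct and follows essentially the same route as the paper: the Sobolev bound plus the Strauss radial lemma to make the nonlinearity subdominant, the differential inequality $w'' \ge 2(1-\delta)w$ for $w = (r\chi_1)^2$, the monotonicity of $e^{-\beta r}(w' + \beta w)$ combined with the integrability of $w$ and $w'$, and finally the rescaling of Remark \ref{r.lio} to pass from $\omega = 1$ to general $\omega$ with $\delta_0 = \delta_1\sqrt{\omega}$. Your sign analysis is in fact the correct one: the monotone quantity must be non-positive (so that $e^{\beta r}w$ is non-increasing), which is what the paper's wording ``has to be non-negative'' evidently intends.
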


To obtain more precise estimate we rewrite
$$ - \Delta \chi_1 +  \chi_1 = F(r) \equiv \chi_1^p , \ \chi_1 \in H^1(R^3), \ \chi_1>0 $$
as follows
$$ \chi_1(|x|) = c \int_{\mathbb{R}^3} \frac{e^{-|x-y|}}{|x-y|} F(|y|) dy. $$
Introducing polar coordinates, we find
$$ \chi_1(|x|) = c \int_0^\infty \int_{\mathbb{S}^2} \frac{e^{-r}}{r} F(|x+r\omega|) d\omega r^2 dr. $$
Now we can use the following identity
$$ \int_{\mathbb{S}^2} F(|x+r\omega|) d\omega = \frac{c}{|x|r} \int_{||x|-r|}^{|x|+r} F(\lambda) \lambda d\lambda$$
so
$$ \chi_1(|x|) = \frac{c}{|x|} \int_0^\infty  e^{-r} \int_{||x|-r|}^{|x|+r} F(\lambda) \lambda d\lambda dr. $$
One can see that
\begin{equation}\label{eq.k1andk2}
   \chi_1(|x|) = \frac{c}{|x|} \left(K_1(F)(|x|) +  K_2(F)(|x|) \right)
\end{equation}
with
$$ K_1(F)(|x|) = \int_0^{|x|}  e^{-|x|} \sinh (\lambda)  F(\lambda) \lambda d\lambda.$$
$$ K_2(F)(|x|) = \int_{|x|}^\infty \sinh (|x|) e^{-\lambda}   F(\lambda) \lambda d\lambda.$$
If one substitutes $F(\lambda)$ with $\chi^p_1(\lambda)$
and note that $F(y) = \chi_1^p(y) \leq C $ for $|y|$ bounded due to \eqref{eq.sob11} and moreover the
estimate of Lemma \ref{l.AS1} implies $F(y) \leq C e^{-\delta_0 p |y|}$ for $|y| \geq 1$ so we deduce
$$ |K_1(F)(|x|)| \leq C e^{-B|x|}, \ \ B = \min (1, \delta_0 p)$$
$$ |K_2(F)(|x|)| \leq C e^{-B|x|}, \ \ B = \min (1, \delta_0 p)$$
so we find
$$|x| |\chi_1(|x|)| \leq C e^{-B|x|}, \ \ B = \min (1, \delta_0 p).$$
If $\delta_0p < 1$ we derive $F(|x|) \leq C e^{-pB|x|}$  so making further iterations we get:

\begin{lem} \label{l.AS2}
For any $\omega >0 $  there exists a
unique positive solution $\chi(x) = \chi_\omega (x)  \in H^1$ of
the equation \eqref{eq.M-S1} and a positive $C=C(\omega)$,  such that
$$ r|\chi_\omega(r)| \leq C e^{-\sqrt{\omega} r}$$
for $r>0.$
\end{lem}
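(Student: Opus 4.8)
The plan is to reduce to the case $\omega=1$ and then run a bootstrap on the exponential decay rate of $\chi_1$, exploiting the integral representation \eqref{eq.k1andk2} whose kernels $K_1,K_2$ have already been estimated. By the scaling identity recorded in Remark \ref{r.lio}, namely $\chi_\omega(x)=\omega^{1/(p-1)}\chi_1(\sqrt\omega\,x)$, it suffices to prove the sharp bound $r\,\chi_1(r)\le C e^{-r}$ for the normalized ground state solving $-\Delta\chi_1+\chi_1=\chi_1^p$; substituting $r\mapsto\sqrt\omega\,r$ and multiplying by $\omega^{1/(p-1)}$ then yields $r\,\chi_\omega(r)\le C(\omega)e^{-\sqrt\omega\,r}$, which is the claim.

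First I would set $F=\chi_1^p$ and use the representation $|x|\,\chi_1(|x|)=c\,(K_1(F)(|x|)+K_2(F)(|x|))$ from \eqref{eq.k1andk2}. The computation already carried out in the excerpt shows the inductive step: if $\chi_1(r)\le Ce^{-\beta r}$ for some rate $\beta>0$, then $F(r)\le Ce^{-p\beta r}$, and inserting this decay into $K_1$ and $K_2$ gives $|K_1(F)(r)|+|K_2(F)(r)|\le C\,e^{-\min(1,p\beta)\,r}$, up to a polynomial factor when $p\beta<1$. Dividing by $|x|$, this upgrades the decay of $\chi_1$ to the new rate $\beta'=\min(1,p\beta)$, the threshold $1$ being precisely the decay rate of the Green's kernel $e^{-|x-y|}/|x-y|$ of $-\Delta+1$.

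Next I would iterate this step, starting from the rate $\beta_0=\delta_0$ furnished by Lemma \ref{l.AS1} and setting $\beta_{n+1}=\min(1,p\,\beta_n)$. Since $p>1$, as long as $\beta_n<1/p$ the map multiplies the rate by the fixed factor $p$, so the $\beta_n$ grow geometrically and after finitely many steps one reaches $p\beta_n\ge1$, at which stage $\beta_{n+1}=1$. At that final step $K_1$ and $K_2$ are governed by the kernel rate $1$ with no polynomial correction, giving $|K_1(F)(r)|+|K_2(F)(r)|\le Ce^{-r}$ and hence the desired $r\,\chi_1(r)\le Ce^{-r}$. Undoing the rescaling completes the proof.

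The main obstacle is bookkeeping rather than analysis: in the sub-threshold regime $p\beta_n<1$ the estimates for $K_1,K_2$ produce a factor of $r$ multiplying $e^{-p\beta_n r}$, which must be absorbed into a slightly smaller exponential $e^{-\beta_{n+1}r}$ with $\beta_{n+1}$ chosen just below $p\beta_n$. One has to verify that the rate still strictly increases at each step, which is guaranteed because the gap $(p-1)\beta_n$ stays bounded below once $\beta_n\ge\delta_0$, and that the multiplicative constant, enlarged at each iteration, remains finite since only finitely many iterations occur before the rate locks at the kernel value $1$.
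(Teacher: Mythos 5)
Your proposal is correct and follows essentially the same route as the paper: the scaling reduction to $\omega=1$, the Green's-function representation \eqref{eq.k1andk2} with the kernels $K_1,K_2$, and the bootstrap $\beta_{n+1}=\min(1,p\beta_n)$ upgrading the decay rate from the $\delta_0$ of Lemma \ref{l.AS1} until it locks at the kernel rate $1$, which is exactly the paper's ``making further iterations'' step. In fact you are slightly more careful than the paper, which silently drops the polynomial factor $r\,e^{-p\beta_n r}$ arising in the sub-threshold regime $p\beta_n\leq 1$; your device of taking $\beta_{n+1}$ just below $p\beta_n$ handles this cleanly and still terminates in finitely many steps.
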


To get asymptotic expansion, we use \eqref{eq.k1andk2}
If one substitutes $F(\lambda)$ with $\chi^p_1(\lambda)$ and apply the estimate of Lemma \ref{l.AS2} (with $\omega=1$)
one can obtain the asymptotic expansions
$$ K_1(\chi_1^p)(|x|) = C_0 e^{-|x|} + O \left(e^{-(p-\delta)|x|} \right), \ \ \ |x| >1,$$
$$ K_1^\prime(\chi_1^p)(|x|) = -C_0 e^{-|x|} + O \left(e^{-(p-\delta)|x|} \right),\ \ \ |x| >1,$$
$$ K_2(\chi_1^p)(|x|) =  O \left(e^{-(p-\delta)|x|} \right),\ \ \ |x| >1,$$
$$ K_2^\prime(\chi_1^p)(|x|) =  O \left(e^{-(p-\delta)|x|} \right),\ \ \ |x| >1,$$
where $\delta$ is any positive number. After rescaling argument we get.

\begin{lem} \label{l.AS3}
For any $\omega >0 $  there exists a
unique positive solution $\chi(|x|) = \chi_\omega (x)  \in H^2$ of
the equation \eqref{eq.M-S1} and a positive $C_0=C(\omega)$,  such that
$$ r\chi_\omega(r) = C_0 e^{-\sqrt{\omega}r} + O \left(e^{-(p-\delta)\sqrt{\omega}r} \right), \ \ \ r>1,$$
$$ \left( r\chi_\omega(r) \right)^\prime = - \sqrt{\omega} C_0 e^{-\sqrt{\omega}r} + O \left(e^{-(p-\delta)\sqrt{\omega}r} \right),  \ \ \ r>1.$$
\end{lem}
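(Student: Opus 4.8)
The plan is to reduce the statement to the normalized case $\omega=1$ by the scaling identity recalled in Remark \ref{r.lio}, and then to read off the two expansions directly from the integral representation \eqref{eq.k1andk2} together with the already established asymptotics of $K_1,K_2$ and of their derivatives.

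First I would treat $\omega=1$. Multiplying \eqref{eq.k1andk2} by $|x|$ gives $|x|\chi_1(|x|)=c\,\bigl(K_1(\chi_1^p)(|x|)+K_2(\chi_1^p)(|x|)\bigr)$, so inserting the stated expansions
\[ K_1(\chi_1^p)(|x|)=C_0 e^{-|x|}+O(e^{-(p-\delta)|x|}),\qquad K_2(\chi_1^p)(|x|)=O(e^{-(p-\delta)|x|}) \]
and absorbing the two remainders yields $r\chi_1(r)=\widehat{C}_0 e^{-r}+O(e^{-(p-\delta)r})$ with $\widehat{C}_0=cC_0$, for $r>1$. For the derivative I would differentiate the same identity, $\bigl(|x|\chi_1(|x|)\bigr)'=c\,\bigl(K_1'(\chi_1^p)(|x|)+K_2'(\chi_1^p)(|x|)\bigr)$, and substitute $K_1'(\chi_1^p)(|x|)=-C_0 e^{-|x|}+O(e^{-(p-\delta)|x|})$ and $K_2'(\chi_1^p)(|x|)=O(e^{-(p-\delta)|x|})$ to obtain $(r\chi_1(r))'=-\widehat{C}_0 e^{-r}+O(e^{-(p-\delta)r})$. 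This is precisely the claim in the case $\omega=1$.

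It remains to restore a general $\omega>0$. By Remark \ref{r.lio} the radial ground state scales as $\chi_\omega(r)=\omega^{1/(p-1)}\chi_1(\sqrt{\omega}\,r)$, hence, writing $g(s)=s\chi_1(s)$ and $s=\sqrt{\omega}\,r$,
\[ r\chi_\omega(r)=\omega^{1/(p-1)-1/2}\,g(\sqrt{\omega}\,r),\qquad \bigl(r\chi_\omega(r)\bigr)'=\omega^{1/(p-1)}\,g'(\sqrt{\omega}\,r). \]
Feeding the $\omega=1$ expansions of $g$ and $g'$ into these identities and setting $C_0=C_0(\omega)=\omega^{1/(p-1)-1/2}\,\widehat{C}_0$, the first relation gives $r\chi_\omega(r)=C_0 e^{-\sqrt{\omega}\,r}+O(e^{-(p-\delta)\sqrt{\omega}\,r})$, while the chain-rule factor $\sqrt{\omega}$ in the second relation turns the coefficient $-\widehat{C}_0$ into $-\omega^{1/(p-1)}\widehat{C}_0=-\sqrt{\omega}\,C_0$, which is exactly the coefficient claimed for $(r\chi_\omega(r))'$.

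The only genuinely delicate points are bookkeeping ones. One is the scaling of the derivative: the extra factor $\sqrt{\omega}$ produced by the chain rule is what converts the $\omega=1$ coefficient $-\widehat{C}_0$ into $-\sqrt{\omega}\,C_0$, and this must be tracked so that the two stated leading coefficients are mutually consistent. The other is the range of validity: the normalized expansions hold for $s=\sqrt{\omega}\,r>1$, i.e. for $r>1/\sqrt{\omega}$, which for $\omega<1$ exceeds the threshold $r>1$ in the statement; since both expansions are asymptotic as $r\to\infty$, the discrepancy on the bounded set $1<r\le 1/\sqrt{\omega}$ is harmless and can be absorbed into the constant of the remainder. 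I expect no obstacle beyond this, because the heavy analytic work---the convergence of the constant $C_0=\int_0^\infty\sinh(\lambda)\chi_1^p(\lambda)\lambda\,d\lambda$ and the $O(e^{-(p-\delta)|x|})$ control of the tails, which both rest on the a priori bound $r\chi_1(r)=O(e^{-r})$ of Lemma \ref{l.AS2}---is already carried out in the $K_1,K_2$ expansions preceding the statement.
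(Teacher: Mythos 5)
Your proposal is correct and takes essentially the same route as the paper: the paper also deduces the $\omega=1$ case by substituting $F=\chi_1^p$ into the representation \eqref{eq.k1andk2} and invoking the $K_1,K_2$ expansions (which rest on Lemma \ref{l.AS2}), and then passes to general $\omega$ by the rescaling of Remark \ref{r.lio}, a step it compresses into the phrase ``after rescaling argument we get''. Your explicit bookkeeping of the chain-rule factor $\sqrt{\omega}$ (turning $-\widehat{C}_0$ into $-\sqrt{\omega}\,C_0$) and of the validity threshold $r>1/\sqrt{\omega}$ simply makes that compressed step precise.
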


\begin{theo} \label{th.AS3}
For any $\omega >0 $  there exists a
unique positive solution $\chi(|x|) = \chi_\omega (x)  \in H^1$ of
the equation \eqref{eq.M-S1}
so that
$$ 0 > \frac{\partial_{|x|}\chi_\omega(x)}{\chi_\omega(x)} > -C_1$$
for some positive constant $C_1.$ Moreover there exists
a positive $C_0=C(\omega)$ constant,  such that
$$ r\chi_\omega(r) = C_0 e^{-\sqrt{\omega}r} + O \left(e^{-(p-\delta)\sqrt{\omega}r} \right), \ \ \ r>1,$$
$$ \left( r\chi_\omega(r) \right)^\prime = - \sqrt{\omega} C_0 e^{-\sqrt{\omega}r} + O \left(e^{-(p-\delta)\sqrt{\omega}r} \right),  \ \ \ r>1.$$
\end{theo}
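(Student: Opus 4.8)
The existence, uniqueness, positivity and radial symmetry of $\chi_\omega$ are already guaranteed by Lemma \ref{l.parLions}, and the two displayed asymptotic identities are verbatim the conclusion of Lemma \ref{l.AS3}. Hence the only genuinely new assertion is the two-sided bound on the logarithmic derivative $q(r):=\chi_\omega'(r)/\chi_\omega(r)$, where $\chi_\omega'=\partial_{|x|}\chi_\omega$. The plan is to establish the sign $q<0$ and the boundedness $q>-C_1$ by separate arguments: the sign rests on the radial monotonicity of the ground state, while the lower bound follows from controlling the two endpoint limits of $q$.

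For the sign, I would first note that by elliptic regularity $\chi_\omega$ is smooth, so that, being radial, it satisfies $\chi_\omega'(0)=0$ and its profile solves the radial form of \eqref{eq.M-S1},
$$ -\chi_\omega''-\tfrac{2}{r}\chi_\omega'+\omega\chi_\omega=\chi_\omega^{p},\qquad \chi_\omega>0,\quad \chi_\omega(r)\to 0 \ \ (r\to\infty).$$
By the moving plane / maximum principle method of Gidas--Ni--Nirenberg already invoked in \cite{GNN81}, the positive decaying solution is strictly radially decreasing, i.e. $\chi_\omega'(r)<0$ for every $r>0$. Since $\chi_\omega>0$ on $(0,\infty)$, this yields $q(r)<0$ throughout $(0,\infty)$, which is the left inequality.

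For the lower bound I would use the substitution $u(r)=r\chi_\omega(r)$ employed in the derivation of Lemma \ref{l.AS3}, for which a direct computation gives
$$ q=\frac{\chi_\omega'}{\chi_\omega}=\frac{u'}{u}-\frac1r.$$
The function $q$ is continuous on $(0,\infty)$ since $\chi_\omega$ is smooth and strictly positive there, so it suffices to evaluate the endpoint limits. As $r\to 0^{+}$ one has $\chi_\omega'(0)=0$ and $\chi_\omega(0)>0$, hence $q(r)\to 0$. As $r\to\infty$, Lemma \ref{l.AS3} gives $u(r)=C_0 e^{-\sqrt{\omega}r}+O(e^{-(p-\delta)\sqrt{\omega}r})$ and $u'(r)=-\sqrt{\omega}C_0 e^{-\sqrt{\omega}r}+O(e^{-(p-\delta)\sqrt{\omega}r})$ with $C_0\neq 0$; since $p-1>0$ the remainders are of strictly lower exponential order, so $u'/u\to-\sqrt{\omega}$ and therefore $q(r)\to-\sqrt{\omega}$. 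A continuous function on $(0,\infty)$ with finite limits at both endpoints is bounded, so there is $C_1>0$ with $q(r)>-C_1$ on $(0,\infty)$, which is the right inequality.

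The main obstacle is the strict monotonicity $\chi_\omega'<0$ on the whole half-line: it is precisely what upgrades $q<0$ from a statement near the origin and near infinity to a global one, and it is the single step that genuinely requires the maximum principle. A self-contained ODE alternative would be to analyze the Riccati equation $q'=-q^2-\tfrac{2}{r}q+\omega-\chi_\omega^{p-1}$ obtained by inserting $\chi_\omega'=q\chi_\omega$ into the radial equation; here $q(0)=0$ and $q'(0)=\tfrac13(\omega-\chi_\omega(0)^{p-1})<0$ because $\chi_\omega(0)^{p-1}>\omega$ (the central value being the strict maximum), so $q$ leaves the origin strictly negative, and one would then rule out a return of $q$ to $0$. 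The remaining verifications---the regularity and the value $\chi_\omega'(0)=0$ at the origin, and the extraction of the limit $q\to-\sqrt{\omega}$ from the expansion of Lemma \ref{l.AS3}---are routine once $C_0\neq 0$ is known.
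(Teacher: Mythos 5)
Your proposal is correct and follows essentially the same route as the paper: Theorem \ref{th.AS3} is stated there without a separate proof, as a direct assembly of the existence/uniqueness of Lemma \ref{l.parLions}, the expansions of Lemma \ref{l.AS3} (with $C_0>0$), and the strict radial monotonicity of the ground state from \cite{GNN81}, which the paper already invokes. Your endpoint-limit computation for $q=\chi_\omega'/\chi_\omega$ (namely $q(r)\to 0$ as $r\to 0^{+}$ and $q(r)\to-\sqrt{\omega}$ as $r\to\infty$, the remainder being negligible since $p-\delta>1$) supplies exactly the routine detail the paper leaves implicit for the bound $q>-C_1$.
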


\begin{proof}[Proof of Theorem \ref{Th:NLSres}]
The proof is easy, so we reduce it in few lines and it is a consequence of the results stated in the previous sections. By Theorem \ref{Th:NLSres}, we obtain that the operator $L_{-}-1$ has the form
of $-\Delta-W,$ where $W$ satisfies the assumption of Theorem \ref{th:noresR3} and this assure that we have no resonance (or eigenvalue) at zero energy, the same is valid for $L_{+}-1$. Finally, using Lemma 16 in \cite{Sch09} we achieve the proof for $H(1).$
\end{proof}

\section{Appendix 1: Asymptotics for solutions to some ODE}

\begin{lem} \label{lem:2}  Assume $ W(s)>0 , \forall s >0$ and
$$(1+s)^2 W(s) \in L^1(0,\infty) .$$
If   $u\in C^2(\mathbb R)$ is a  solution of $u''+W u=0$ and there exist $s_0 \geq 0$ so that
  $u(s) > 0$ for $s > s_0,$
then there exits a non - negative number $C_1$ so that we have the relations
\begin{equation}\label{eq.idin1}
    u^\prime(s) = C_1+ \int_s^\infty W(\tau) u(\tau) d\tau,
\end{equation}
\begin{equation}\label{eq.idin2}
    u(s) = u(s_0) + \int_{s_0}^\infty(\tau-s_0) W(\tau) u(\tau) d\tau + C_1 (s-s_0) - \int_s^\infty (\tau - s) W(\tau) u(\tau) d\tau,
\end{equation}
as well as the asymptotic expansions (valid for $s \rightarrow \infty$)
$$ u(s) = C_0 + C_1 s + O( g(s)), \ \ u^\prime(s) = C_1+  O( h(s)),$$
where
$$ h(s) = \int_s^\infty (1+s)W(s) ds, \ \ g(s) = \int_s^\infty (1+s)^2 W(s) ds. $$
\end{lem}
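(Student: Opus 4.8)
The plan is to exploit the concavity of $u$ forced by the equation. For $s > s_0$ one has $u''(s) = -W(s) u(s) < 0$, so $u$ is concave and $u'$ is monotonically decreasing on $(s_0, \infty)$; hence $\lim_{s\to\infty} u'(s)$ exists in $[-\infty, \infty)$. I would first argue that this limit, call it $C_1$, is finite and non-negative: if it were negative (or $-\infty$), then $u'(s) \le -c < 0$ for all large $s$, forcing $u(s) \to -\infty$ and contradicting $u(s) > 0$. Consequently $u'(s) \ge C_1 \ge 0$ for every $s > s_0$, so $u$ is non-decreasing, and since $u'$ is also bounded above by $u'(s_0)$ we obtain the linear bound $u(s) \le C(1+s)$ on $(s_0,\infty)$.

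With this linear growth in hand, the hypothesis $(1+s)^2 W(s) \in L^1$ gives $W(\tau) u(\tau) \le C(1+\tau) W(\tau) \in L^1(s_0,\infty)$, so all the tail integrals converge absolutely. Integrating the identity $u'' = -Wu$ over $(s, R)$ yields $u'(s) = u'(R) + \int_s^R W(\tau) u(\tau)\, d\tau$; letting $R \to \infty$ and using $u'(R) \to C_1$ produces \eqref{eq.idin1}. I would then integrate \eqref{eq.idin1} from $s_0$ to $s$, giving $u(s) - u(s_0) = C_1(s-s_0) + \int_{s_0}^s \int_\sigma^\infty W(\tau) u(\tau)\, d\tau\, d\sigma$, and evaluate the double integral by Tonelli's theorem (all integrands are non-negative). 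Exchanging the order of integration splits the region into $\{s_0 \le \tau \le s\}$, contributing $\int_{s_0}^s (\tau - s_0) W u\, d\tau$, and $\{\tau > s\}$, contributing $(s-s_0)\int_s^\infty W u\, d\tau$. A short algebraic rearrangement, adding and subtracting $\int_s^\infty (\tau-s_0) W u\, d\tau$, rewrites this sum as $\int_{s_0}^\infty (\tau-s_0) W u\, d\tau - \int_s^\infty (\tau-s) W u\, d\tau$, which is exactly \eqref{eq.idin2}.

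Finally the asymptotic expansions fall out of the two integral representations by estimating the tails. From \eqref{eq.idin1}, $|u'(s) - C_1| \le \int_s^\infty W u\, d\tau \le C \int_s^\infty (1+\tau) W(\tau)\, d\tau = C\, h(s)$, giving $u'(s) = C_1 + O(h(s))$. From \eqref{eq.idin2}, the remainder $\int_s^\infty (\tau - s) W u\, d\tau$ is bounded, using $(\tau-s) u(\tau) \le C(1+\tau)^2$, by $C \int_s^\infty (1+\tau)^2 W(\tau)\, d\tau = C\, g(s)$, while all the other terms combine into the constant $C_0 := u(s_0) - C_1 s_0 + \int_{s_0}^\infty (\tau - s_0) W u\, d\tau$, yielding $u(s) = C_0 + C_1 s + O(g(s))$. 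The one genuinely delicate point is the opening step, namely deducing that $u'$ has a finite non-negative limit: this is what simultaneously furnishes the constant $C_1$, the linear bound that makes every integral converge, and the sign assertion in the statement. Everything after it is bookkeeping with Tonelli's theorem and the two tail estimates.
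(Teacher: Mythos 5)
Your proof is correct and follows essentially the same route as the paper's: exploit the concavity $u''=-Wu<0$ on $(s_0,\infty)$ to obtain the linear bound $0<u(s)\leq C(1+s)$ and the existence of $C_1=\lim_{s\to\infty}u'(s)\geq 0$, then use $(1+s)^2W\in L^1$ to make all tail integrals converge and read off \eqref{eq.idin1}, \eqref{eq.idin2} and the expansions. The only differences are cosmetic orderings (you extract the monotone limit of $u'$ before the linear bound, where the paper uses a Taylor expansion first, and you spell out the Tonelli step and the sign argument for $C_1$ that the paper leaves terse), so nothing substantive separates the two arguments.
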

\begin{proof}We have $$ u^{\prime \prime}(s) = - W(s)u(s) < 0, \ \ s \geq s_0.$$ The Taylor expansion gives
$$0<u(s) = D_0 + D_1 s + \frac{u^{\prime \prime}(\xi)(s-s_0)^2}{2} < D_0 + D_1 s,$$
where $$s> s_0, \xi \in (s_0, s), \ \ D_1= u^\prime(s_0),\  D_0=u(s_0) -  s_0 u^\prime(s_0).$$
The inequality
$$0 < u(s) < C(1+s)$$ and the assumption
$$(1+s)^2 W(s) \in L^1(0,\infty) $$ show that
\begin{equation}\label{eq.basinn}
   u^\prime(s)-u^\prime(t) = -\int_t^s W(\tau) u(\tau) d \tau
\end{equation}
is small when $ s_0 < t < s$ and $s,t$ are large enough. This argument shows the existence of the limit
$$ \lim_{s \rightarrow \infty} u^\prime(s) = C_1$$
as well as the asymptotic expansion
$$ u^\prime(s) = C_1+  O( h(s)).$$
Integrating this relation and using the fact
$$ \int_s^\infty h(\tau) d\tau = O( g(s)),$$
we obtain the desired expansion
$$ u(s) = C_0 + C_1 s + O( g(s)).$$
The fact that $C_1 \geq 0$ follows from the positivity of $u(s)$ for $s > s_0.$
Finally, to prove  (\ref{eq.idin2}) we use (\ref{eq.basinn}) and integrating (\ref{eq.idin2}) we find
(\ref{eq.idin1}).

This completes the proof.
\end{proof}

A slight modification is the following.

\begin{lem} Assume $ W(s)>0 , \forall s >0$ and
$$(1+s) W(s) \in L^1(0,\infty) .$$
If   $u\in C^2(\mathbb R)$ is a  solution of $u''+W u=0$ and there exist $s_0 > 0$ so that
  $u(s) > 0$ for $s > s_0,$
then
$$ u(s) \leq C(1+s)$$ and the limit
$$ \lim_{s \rightarrow \infty} u^\prime(s)$$
 exists.
\end{lem}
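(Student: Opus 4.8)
The plan is to mirror the proof of the preceding Lemma~\ref{lem:2}, but observe that the weaker integrability hypothesis $(1+s)W(s)\in L^1(0,\infty)$ (rather than $(1+s)^2 W(s)\in L^1$) is exactly what is needed for the two conclusions actually claimed here, namely the linear bound $u(s)\le C(1+s)$ and the existence of $\lim_{s\to\infty}u'(s)$. I will not attempt to recover the full asymptotic expansion $u(s)=C_0+C_1 s+O(g(s))$, since that genuinely requires the stronger hypothesis to control the remainder $g(s)=\int_s^\infty(1+s)^2W\,ds$; the point of this ``slight modification'' is that if one only wants boundedness of $u'$ at infinity and the linear growth bound, one integrability power can be saved.

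\textbf{Step 1 (the linear bound).} Since $u''(s)=-W(s)u(s)$ and $W>0$, $u>0$ for $s>s_0$, we have $u''(s)<0$ on $(s_0,\infty)$, so $u$ is concave there. A concave function lies below any of its tangent lines, so with $D_1=u'(s_0)$ and $D_0=u(s_0)-s_0u'(s_0)$ the Taylor expansion with Lagrange remainder gives
\[
0<u(s)=D_0+D_1 s+\frac{u''(\xi)(s-s_0)^2}{2}<D_0+D_1 s,\qquad \xi\in(s_0,s),
\]
whence $0<u(s)<C(1+s)$ for $s>s_0$. This is identical to the corresponding step of Lemma~\ref{lem:2} and uses no integrability at all.

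\textbf{Step 2 (existence of the limit of $u'$).} Integrating $u''=-Wu$ over $(t,s)$ with $s_0<t<s$ yields the Cauchy-type relation
\[
u'(s)-u'(t)=-\int_t^s W(\tau)u(\tau)\,d\tau.
\]
Now feed in the linear bound from Step~1: $|W(\tau)u(\tau)|\le C(1+\tau)W(\tau)$, and by the hypothesis $(1+s)W(s)\in L^1(0,\infty)$ the tail $\int_t^\infty(1+\tau)W(\tau)\,d\tau\to 0$ as $t\to\infty$. Hence the right-hand side is arbitrarily small for $s,t$ large, so $u'(s)$ satisfies the Cauchy criterion at infinity and the limit $\lim_{s\to\infty}u'(s)$ exists. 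This completes the proof.

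\textbf{The main (and only real) obstacle} is noticing that the single integrability power $(1+s)W(s)\in L^1$ suffices here precisely because the argument never integrates the remainder a second time: the linear growth bound only needs concavity, and the convergence of $u'$ only needs $\int^\infty(1+\tau)W\,d\tau<\infty$ together with the already-established bound $u(\tau)=O(1+\tau)$. One must resist the temptation to also claim the refined expansion, which would demand $(1+s)^2W(s)\in L^1$ to guarantee $g(s)<\infty$; the weaker hypothesis is not strong enough to bound $\int_s^\infty(\tau-s)W(\tau)u(\tau)\,d\tau$, so no information about $\lim u(s)/s$ beyond the $O(s)$ bound is asserted.
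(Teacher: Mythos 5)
Your proof is correct and follows essentially the same route as the paper's: the identical concavity/Taylor-with-Lagrange-remainder argument yields $0<u(s)<D_0+D_1s$, and the identical Cauchy-criterion step, combining $u'(s)-u'(t)=-\int_t^s W(\tau)u(\tau)\,d\tau$ with the linear bound and the tail of $(1+s)W(s)\in L^1(0,\infty)$, gives the existence of $\lim_{s\to\infty}u'(s)$. Your closing remark on why one integrability power suffices (the remainder is never integrated a second time) is accurate commentary, though the paper itself states the proof without it.
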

\begin{proof}As in the proof of the previous Lemma we have $$ u^{\prime \prime}(s) = - W(s)u(s) < 0.$$ The Taylor expansion gives
$$0<u(s) = D_0 + D_1 s + \frac{u^{\prime \prime}(\xi)(s-s_0)^2}{2} < D_0 + D_1 s,$$
where $$s> s_0, \xi \in (s_0, s), \ \ D_1= u^\prime(s_0),\  D_0=u(s_0) -  s_0 u^\prime(s_0).$$
The inequality
$$0 < u(s) < C(1+s)$$ and the assumption
$$(1+s) W(s) \in L^1(0,\infty) $$ show that
$$ u^\prime(s)-u^\prime(t) = -\int_t^s W(\tau) u(\tau) d \tau $$
is small when $ s_0 < t < s$ and $s,t$ are large enough. This argument shows the existence of the limit
$$ \lim_{s \rightarrow \infty} u^\prime(s) .$$
This completes the proof.
\end{proof}

Our next step is to consider the equation \eqref{eq.Pu} with potential
  $W(r) = W_1(r)+W_2(r),$ where
\begin{equation}\label{eq.w1a}
    W_1(r) = -\frac{\alpha (\alpha + 1)}{r^2},  \quad \alpha \geq 0,
\end{equation}
while $W_2(r) $ is a $C^1(0, \infty)$ positive strictly decreasing function such that for some positive constants $C, \varepsilon_0$ satisfies the estimate \eqref{eq.h2b}.

\noindent The first step is to obtain asymptotic expansions of the solution and for this aim, by the Definitions \ref{eq.h2sob} and \ref{defreslistrong}, we give the following lemma.

\begin{lem} \label{l.asex} Suppose  \eqref{eq.h2b} is true.

\noindent a) If $0$ is  an eigenvalue  of $P$ and $Pu=0,$ in sense of Definition \ref{eq.h2sob}, then one can find a real number $C$ so that
\begin{equation}\label{eq.O7las}
    u(r) =  \frac{C}{r^\alpha} + O \left(e^{-\varepsilon_0 r/2} \right)
\end{equation}
and
\begin{equation}\label{eq.O8las}
    u^\prime (r) =  - \frac{C \alpha}{r^{\alpha+1}} + O \left(e^{-\varepsilon_0 r/2} \right)
\end{equation}
as $r \rightarrow \infty$.

\noindent b) If $0$ is  a strong  resonance of $P$ and $Pu=0$ in sense of Definition \ref{defreslistrong}, then there exists a real number $C>0$ so that
\begin{equation}\label{eq.O7l}
    u(r) =  \frac{C}{r^\alpha} + O \left( e^{-\varepsilon_0 r/2} \right)
\end{equation}
and
\begin{equation}\label{eq.O8l}
    u^\prime (r) =  - \frac{C \alpha}{r^{\alpha+1}} + O \left(e^{-\varepsilon_0 r/2} \right)
\end{equation}
as $r \rightarrow \infty$.
\end{lem}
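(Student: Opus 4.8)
The plan is to treat $Pu=0$ in the form $u'' - \tfrac{\alpha(\alpha+1)}{r^2}u = -W_2(r)u(r)$ as a perturbation of the Euler equation $v'' - \tfrac{\alpha(\alpha+1)}{r^2}v=0$, whose fundamental system is $\{\,r^{\alpha+1},\,r^{-\alpha}\,\}$ with constant Wronskian $-(2\alpha+1)$. First I would record that $u$ satisfies an a priori polynomial bound: in case a) this comes from $u\in H^1$ together with \eqref{eq.o2}, and in case b) from Definition \ref{defreslistrong}. Combined with the exponential decay $W_2(r)\lesssim e^{-\varepsilon_0 r}$ from \eqref{eq.h2b}, the source $-W_2u$ is exponentially small, so the two improper integrals $\int_r^\infty \rho^{-\alpha}W_2 u\,d\rho$ and $\int_r^\infty \rho^{\alpha+1}W_2 u\,d\rho$ converge absolutely.

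Next I would write the general solution by variation of parameters, placing both integration limits at $+\infty$ so that the particular part decays:
\[ u_p(r) = \frac{1}{2\alpha+1}\Bigl( r^{\alpha+1}\!\int_r^\infty \rho^{-\alpha}W_2(\rho)u(\rho)\,d\rho \;-\; r^{-\alpha}\!\int_r^\infty \rho^{\alpha+1}W_2(\rho)u(\rho)\,d\rho\Bigr), \]
so that $u(r) = A\,r^{\alpha+1} + C\,r^{-\alpha} + u_p(r)$ for constants $A,C$. The growth bound now does the decisive work: since $r^{\alpha+1}$ grows strictly faster than the admissible rate $(1+r)^a$ with $a<1\le\alpha+1$ (and faster than the $r^{-1/2}$ decay of an $H^1$ solution given by \eqref{eq.o2}), while $u_p$ and $C r^{-\alpha}$ stay bounded, the coefficient of the growing mode must vanish, $A=0$. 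This isolates the constant $C$ appearing in \eqref{eq.O7las} and \eqref{eq.O7l}.

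It then remains to estimate $u_p$ and its derivative. The key point is that each term is a polynomial factor times an exponentially small tail integral: because $W_2(\rho)\lesssim e^{-\varepsilon_0\rho}$ and $|u(\rho)|$ grows at most polynomially, one gets $\int_r^\infty \rho^{k}W_2 u\,d\rho$ bounded by a polynomial times $e^{-\varepsilon_0 r}$, and multiplication by $r^{\alpha+1}$ or $r^{-\alpha}$ still leaves a quantity $O(e^{-\varepsilon_0 r/2})$ for large $r$; this yields \eqref{eq.O7las} and \eqref{eq.O7l}. Differentiating the representation and using that the variation-of-parameters cross terms cancel, so that $u'(r) = -C\alpha\,r^{-\alpha-1} + c_1(r)(\alpha+1)r^{\alpha} - c_2(r)\alpha\,r^{-\alpha-1}$ with the same exponentially small coefficients $c_1,c_2$, the identical bookkeeping produces \eqref{eq.O8las} and \eqref{eq.O8l}. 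In case b) the strict positivity $C>0$ follows from the positivity of $u$ at infinity (the real resonance solution keeps a sign there), together with $C\neq0$, since $C=0$ would force exponential decay and hence $u\in L^2$, i.e. an eigenfunction.

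The main obstacle I anticipate is the treatment of the growing fundamental solution $r^{\alpha+1}$: one must simultaneously verify that the integral built from it, namely $r^{\alpha+1}\int_r^\infty \rho^{-\alpha}W_2 u\,d\rho$, is genuinely exponentially small — it is only the exponential decay of $W_2$, and not any decay of $u$, that beats the polynomial blow-up of the prefactor — and that the a priori polynomial bound on $u$ is strong enough to annihilate the coefficient $A$. Driving the remainder down to the sharp rate $e^{-\varepsilon_0 r/2}$ rather than a mere $o(1)$ is precisely where this ``exponential beats polynomial'' estimate must be carried out with care.
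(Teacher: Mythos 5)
Your proposal is correct and takes essentially the same route as the paper: the paper integrates the equivalent divergence form $\left[r^{-2\alpha}\left(r^\alpha u\right)'\right]' = -r^{-\alpha}W_2 u$ twice to arrive at exactly the decomposition $u(r) = C_0\,\frac{r^{\alpha+1}}{2\alpha+1} + \frac{C}{r^\alpha} + O\left(e^{-\varepsilon_0 r/2}\right)$ that your variation-of-parameters representation with the Euler system $\{r^{\alpha+1}, r^{-\alpha}\}$ produces, and then annihilates the growing mode by the same a priori bound (boundedness of $u$ from $H^1$ in case a), the $(1+r)^a$ bound of Definition \ref{defreslistrong} in case b)). The only substantive difference is that you also justify $C\neq 0$ in part b) (if $C=0$ the expansions force $u\in H^1$, i.e.\ a zero eigenfunction, contradicting Theorem \ref{l.onee}, whose proof uses only part a)), a point the paper's proof omits by declaring b) ``similar'' and skipping it.
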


\begin{proof} First we prove a). One can rewrite the equation $Pu=0$ as
\begin{equation}\label{eq.O1as}
    \left[r^{-\alpha} \left( u'(r) + \frac{\alpha}r  u(r)\right)   \right]' + r^{-\alpha} W_2(r)u(r)=0\,
\end{equation}
or as
\begin{equation}\label{eq.O1as007}
    \left[r^{-2\alpha} \left( r^\alpha u(r) \right)^\prime   \right]' + r^{-\alpha} W_2(r)u(r)=0\, .
\end{equation}
Note that the assumption $W_2(r) \in C^1(0,\infty)$
combined with the equation $Pu=0$ imply that $u \in C^2(R_1, R_2)$ for any $0 < R_1 < R_2.$ Integrating \eqref{eq.O1as007} in the interval $(R_1, R_2),$ we find
\begin{equation}\label{eq.O2as} \begin{split}
    \left| R_2^{-2\alpha} ( R_2^{\alpha} u(R_2) )' - R_1^{-2\alpha} ( R_1^{\alpha} u(R_1) )' \right| = \\ = \left| \int_{R_1}^{R_2} \tau^{-\alpha}
    W_2(\tau) u(\tau) \dif\tau \right|,
\end{split}\end{equation}
so using the assumption \eqref{eq.h2b} together with the fact that $u$ is bounded (since it belongs to $H^1(1,+\infty)\cap C^1 (1,+\infty)$), and taking $R_1>1,$ we find
\begin{equation}\label{eq.int1}
\left| \int_{R_1}^{R_2} \tau^{-\alpha}
    W_2(\tau) u(\tau) \dif\tau \right| \leq C e^{-\varepsilon_0 R_1/2}.
    \end{equation}
   In this way we conclude that the limit
    \begin{equation}\label{eq.O4}
       \lim_{r  \rightarrow \infty}  r^{-2\alpha} \left( r^\alpha u(r) \right)^\prime,
    \end{equation}
exists and it is equal to a real constant $C_0.$
By this, we achieve the expansion
\begin{equation}\label{eq.O5as}
    \left( r^\alpha u(r) \right)' = C_0 r^{2\alpha} +  O \left( e^{-\varepsilon_0 r/2} \right).
\end{equation}
Consider now the function
\[ g(r) = r^\alpha u(r) - C_0 \frac{r^{2\alpha + 1}}{2\alpha+1},\]
then \eqref{eq.O5as} implies that $g^\prime(r)  \in L^1(1,\infty).$ Moreover we can see that $g(r)$ has a limit (say $C$) as $r$ goes to $\infty$ and
\[ g(r) = C - \int_r^\infty g'(\tau) \dif\tau = C + O( e^{-\varepsilon_0 r/2} ).\]
Thus we obtain
\begin{equation}\label{eq.O7as}
    u(r) = C_0 \, \frac{r^{\alpha+1}}{2\alpha + 1} + \frac{C}{r^\alpha} + O \left( e^{-\varepsilon_0 r/2}  \right)
\end{equation}
and
\begin{equation}\label{eq.O8as}
    u' (r) = C_0 \, \frac{(\alpha+1) r^{\alpha}}{2\alpha + 1} - \frac{C \alpha}{r^{\alpha+1}} + O \left(e^{-\varepsilon_0 r/2} \right)
\end{equation}
as $r$ move to infinity. Comparing these asymptotic expansions with the fact that $u$ is bounded, we see that $C_0=0$ and this completes the first part of the lemma.

The proof of b) can be obtained similarly to the above  using the assumption \eqref{eq.1asust}, so we skip it.
\end{proof}
The above arguments suffices to get
\begin{lem} \label{l.oneresweak}
If $0$ is  a weak resonance of $P$ and $Pu=0$, then one can find  real numbers $C_0, C_1$ so that
\begin{equation}\label{eq.O7leakw}
    u(r) =  C_0 \, \frac{r^{\alpha+1}}{2\alpha + 1} + \frac{C_1}{r^\alpha} + O \left(e^{-\varepsilon_0 r/2} \right)
\end{equation}
and
\begin{equation}\label{eq.O8lweak}
    u^\prime (r) = C_0 \, \frac{(\alpha+1) r^{\alpha}}{2\alpha + 1} - \frac{C_1 \alpha}{r^{\alpha+1}} + O \left(e^{-\varepsilon_0 r/2}\right)
\end{equation}
as $r >1$ tends to infinity.
\end{lem}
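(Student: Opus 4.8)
The plan is to re-run the argument proving Lemma \ref{l.asex} almost verbatim, but to stop one step earlier. For a \emph{weak} resonance the only a priori information on $u$ is the exponential bound $|u(r)| \leq C e^{\delta r}$ coming from Definition \ref{defresliweak}, and this is too weak to force the coefficient of the growing term to vanish. Accordingly I expect the conclusion to retain both constants $C_0$ and $C_1$, rather than setting $C_0 = 0$ as was done for genuine eigenvalues and strong resonances. As in the previous proofs we may assume $u$ is real-valued, so that the constants produced are real.

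Concretely, I would first rewrite the distributional identity $Pu = 0$ in the divergence form \eqref{eq.O1as007}, namely
\[ \left[ r^{-2\alpha} \left( r^\alpha u(r) \right)^\prime \right]^\prime + r^{-\alpha} W_2(r) u(r) = 0, \]
and note, exactly as before, that $W_2 \in C^1(0,\infty)$ together with the equation upgrades $u$ to a classical $C^2$ solution on every interval $(R_1, R_2) \subset (0,\infty)$. Integrating this identity on $(R_1, R_2)$ produces \eqref{eq.O2as}.

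The heart of the matter is the integral estimate replacing \eqref{eq.int1}. Here I would insert the weak bound $|u(r)| \leq C e^{\delta r}$ and the decay $|W_2(r)| \leq C e^{-\varepsilon_0 r}$ to obtain
\[ \left| \int_{R_1}^{R_2} \tau^{-\alpha} W_2(\tau) u(\tau) \dif\tau \right| \leq C \int_{R_1}^\infty e^{-(\varepsilon_0 - \delta)\tau} \dif\tau \leq C e^{-\varepsilon_0 R_1/2}, \]
where the last inequality uses the standing restriction $0 < \delta < \varepsilon_0/2$ (inherited from Lemma \ref{l;strres}), which guarantees $\varepsilon_0 - \delta > \varepsilon_0/2$. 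This is precisely the step where the exponential allowance in the weak-resonance definition must be reconciled with the claimed remainder $O(e^{-\varepsilon_0 r/2})$, and I regard it as the main (though mild) obstacle: one must verify that the growth of $u$ is genuinely dominated by the decay of $W_2$ at exactly the rate $\varepsilon_0/2$, so that the exponential error terms survive unchanged. With this bound in hand, the limit \eqref{eq.O4} exists, equals some real $C_0$, and one recovers \eqref{eq.O5as}.

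To finish I would introduce $g(r) = r^\alpha u(r) - C_0 r^{2\alpha+1}/(2\alpha+1)$, observe that \eqref{eq.O5as} gives $g^\prime(r) = O(e^{-\varepsilon_0 r/2}) \in L^1(1,\infty)$, hence that $g$ converges to a real limit (which I rename $C_1$) with $g(r) = C_1 + O(e^{-\varepsilon_0 r/2})$. Unwinding the definition of $g$ yields the expansions \eqref{eq.O7as} and \eqref{eq.O8as}, which are exactly \eqref{eq.O7leakw} and \eqref{eq.O8lweak}. The only, and deliberate, difference from the proof of Lemma \ref{l.asex} is that no further comparison with a boundedness or sublinear-growth hypothesis is available, so $C_0$ is left arbitrary; this is why the weak-resonance expansion carries the extra growing term $C_0\, r^{\alpha+1}/(2\alpha+1)$ that was forced to vanish in \eqref{eq.O7las} and \eqref{eq.O7l}.
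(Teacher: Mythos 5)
Your proposal is correct and takes essentially the same route as the paper, which proves Lemma \ref{l.oneresweak} simply by rerunning the argument of Lemma \ref{l.asex} and omitting the final comparison with boundedness that forced $C_0=0$ in the eigenvalue and strong-resonance cases. Your explicit check that the weak-resonance bound $|u(r)|\leq C e^{\delta r}$ with the standing restriction $0<\delta<\varepsilon_0/2$ still yields the $O\left(e^{-\varepsilon_0 r/2}\right)$ remainder in \eqref{eq.int1} is precisely the step the paper leaves implicit, and you resolve it correctly.
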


\section{Appendix 2}

In this Section we complete the discussion concerning the weak resonances and its connection with
different type of potentials. It seems that the weak resonances cannot be never avoid, more precisely
they are a intrinsic character of the structure of the differential equation involved in the description of such phenomena. In order to do that we look at large potentials and small potentials.

\subsection{Large potentials do not generate weak resonance at the spectral origin.}

As in the previous section we shall assume $ W(s)>0 , \forall s >0$ and
$$(1+s)^2 W(s) \in L^1(0,\infty) .$$

To show that all solutions    $u\in C^2(\rone)$ to $u''+W u=0$ having linear growth at infinity
are identically zero, we can apply Lemma \ref{lem:2} so without loss of generality one can assume
$$ u(0)=0, u(s)>0 ,\ \forall s >0.$$
The key assumption that will guarantee that such solutions do not exist is the following one
\begin{equation}\label{eq.LPNR}
   \frac{1}{M} \int_0^M s^2 W(s) ds \geq 1
\end{equation}
for some real $M>0.$

Turning back to the integral equation of Lemma \ref{lem:2} we have the following relations
\begin{equation}\label{eq.idin1asy}
    u^\prime(s) = C_1+ \int_s^\infty W(\tau) u(\tau) d\tau,
\end{equation}
\begin{equation}\label{eq.idinsy2a}
    u(s) =  \int_{0}^\infty \tau W(\tau) u(\tau) d\tau + C_1 s - \int_s^\infty (\tau - s) W(\tau) u(\tau) d\tau,
\end{equation}
so we can introduce the operator
$$ K(u)(s) = \int_{0}^\infty \tau W(\tau) u(\tau) d\tau - \int_s^\infty (\tau - s) W(\tau) u(\tau) d\tau =$$
$$ \int_{0}^s \tau W(\tau) u(\tau) d\tau + s \int_s^\infty  W(\tau) u(\tau) d\tau.$$
It is clear that $u \geq 0, \ \forall s \geq 0$ implies $K(u)(s) \geq 0 \ \forall s \geq 0$.
The relation (\ref{eq.idin1asy}) show that $u^\prime(s)>0$ so for the interval
$[0,M]$ so that $u^\prime(s) \geq C^* = u^\prime(M)  > 0 $ for $ 0 \leq s \leq M.$ Then we have the estimate
$$ u(s) \geq C^* s , \ \ \forall s \in [0,M].$$
For any $s \in [0,M]$ we have
$$K(u)(s) > K_M(u)(s) \equiv \int_{0}^s \tau W(\tau) u(\tau) d\tau + s \int_s^M  W(\tau) u(\tau) d\tau $$
so
$$ \frac{K(u)(s)}{s} \geq C_*  \left(\frac{1}{s}\int_{0}^s \tau^2 W(\tau) d\tau + \int_s^\delta \tau W(\tau)  d\tau \right).$$
The function
$$ \frac{1}{s}\int_{0}^s \tau^2 W(\tau) d\tau + \int_s^M \tau W(\tau)  d\tau $$ is decreasing so
$$ \frac{K(u)(s)}{s} \geq C_* \left( \frac{1}{M}\int_{0}^M \tau^2 W(\tau) d\tau \right) .$$
The assumption (\ref{eq.LPNR}) implies
$$ D = \frac{1}{M}\int_{0}^M \tau^2 W(\tau) d\tau \geq 1.$$
so if $u$ solves the equation $u(s) = C_1s + K(u)(s)$ we can interating the estimate
$$ \frac{u(s)}{s} \geq C^*  \ \forall s \in [0,M] \Longrightarrow  \frac{u(s)}{s} = C_1 + C^* D > C^* \ \forall s \in [0,M],$$
we find
$$\lim_{s \rightarrow 0_+}\frac{u(s)}{s} = u^\prime(0) \geq C^* + N C_1 ,$$ but this is a contradiction since  $N$ is arbitrary.

\subsection{Small  potentials  generate weak resonance at the spectral origin.}

To construct nontrivial solutions    $u\in C^2(\mathbf R_+)$ to $u''+W u=0$ having linear growth at infinity, we assume
\begin{equation}\label{eq.LPER}
 \sum_{M>0} \frac{1}{M}\int_0^M s^2 W(s) ds = D  < 1.
\end{equation}

\begin{remark}
One sufficient condition for (\ref{eq.LPER}) is
$$\int_0^\infty s W(s) ds < 1, $$
since we have the estimate
$$ \frac{1}{M}\int_0^M s^2 W(s) ds \leq \int_0^M s W(s) ds.$$
\end{remark}

Consider the integral equation
$$ u(s) = C_1 s + K(u)(s), $$
where
$$ K(u)(s) = \int_{0}^\infty \tau W(\tau) u(\tau) d\tau - \int_s^\infty (\tau - s) W(\tau) u(\tau) d\tau =$$
$$ \int_{0}^s \tau W(\tau) u(\tau) d\tau + s \int_s^\infty  W(\tau) u(\tau) d\tau.$$

We plan to show that this equation has a solution in the Banach space obtained as a closure of the linear space $L$ formed by the functions $u(s) \in C([0, \infty)),$ such that $u(0)=0$ and
$$ \|| u \|| = \sup_{s > 0 } \frac{|u(s)|}{s} < \infty.$$
To be more precise, $B$ is the closure of $L$ with respect to the norm $ \|| u \||.$

To show this fact it is sufficient to notice that
$$ \|| K(u) \|| \leq D \|| u \||$$
so the assumption $D<1$ enables one to apply a contraction argument for the equation
$$ u(s) = C_1 s + K(u)(s). $$

\section*{Acknowledgements}
The first  author was supported by the Italian National Council of Scientific Research (project PRIN No.
2008BLM8BB)
entitled: "Analisi nello spazio delle fasi per E.D.P."\\
The second author is supported by an INdAM grant. Currently he is a Academic Visitor at Department of Mathematics of the Imperial College London.

\bibliographystyle{amsalpha} 


\end{document}